\documentclass[11pt]{article}
\usepackage{graphicx}
\usepackage{authblk}
\usepackage{mathtools}
\usepackage{amsmath}
\usepackage{amsthm}
\usepackage{tikz}
\usepackage{amsmath}
\usepackage{amsfonts}
\usepackage[pagewise]{lineno}

\usepackage{amssymb, amsmath}
\usepackage{epstopdf, wrapfig, framed, caption}
\usepackage{color}
\usepackage[makeroom]{cancel}
\usepackage{stmaryrd}
\usepackage{hyperref}
\usepackage{makecell}
\usepackage[extra]{tipa}

\def\mbbR{\mathbb R}

\newcommand{\eps} {\epsilon}

\newcommand{\be}{\begin{equation}}
\newcommand{\ee}{\end{equation}}
\newcommand{\beq}{\begin{equation}}
\newcommand{\eeq}{\end{equation}}
\newcommand{\ba}[1] {\begin{array}{ #1 }}
\newcommand{\ea}{\end{array}}

\def\tU{\tilde{U}}

\def\mrL{\mathrm L}

\def\cC{{\mathcal C}}
\def\cD{{\mathcal D}}

\def\cL{{\mathcal L}}
\def\cM{{\mathcal M}}
\def\cN{{\mathcal N}}

\def\cS{{\mathcal S}}

\def\mbbC{\mathbb{C}}

\def\mbbE{\mathbb{E}}

\def\mbbR{\mathbb{R}}

\def\mbbZ{\mathbb{Z}}

\def\mrA{\textrm A}
\def\mrB{\textrm B}
\def\mrC{\textrm C}
\def\mrD{\textrm D}

\def\mrF{\textrm F}
\def\mrG{\textrm G}
\def\mrH{\textrm H}
\def\mrI{\textrm I}

\def\mrL{\textrm L}
\def\mrM{\textrm M}

\def\mrO{\textrm O}

\def\mrR{\textrm R}

\def\mrU{\textrm U}
\def\mrV{\textrm V}

\def\mrd{\textrm d}

\newtheorem{thm}{Theorem}
\newtheorem{lemma}[thm]{Lemma}

\newtheorem{remark}{Remark}
\newtheorem{MR}{Main Result}

\renewcommand{\appendix}{\Alph{section}}
\numberwithin{equation}{section}
\title{Curve Lengthening Bifurcations in Modally Filtered Nonlinear Schr\"odinger Systems}

\author[1]{Keith Promislow}
\author[2]{Abba Ramadan}
\affil[1]{Department of Mathematics, Michigan State University,
East Lansing, MI 48824, USA}
\affil[2]{Department of Mathematics,
The University of Alabama, 
Tuscaloosa, AL 35401, USA}

\begin{document}
\maketitle
\begin{abstract}
Extensions of the parametric nonlinear Schr\"odinger equations (PNLS) for phase sensitive optical resonance are developed that preserve the curve lengthening bifurcation seen in the original system. This bifurcation occurs in sharp interface reductions when the motion of the interface transitions from curvature driven flow (curve shortening) to motion against curvature regularized by higher order Willmore effects (curve lengthening). We construct a specific class of down-phase self-interaction operators via a spectral transform of the down-up operator.  While the bifurcation regime the corresponding modally filtered nonlinear Schr\"odinger systems preserve the linear stability of the front, admit the sign flip in the linear term in the normal velocity while preserving the proper sign of the Willmore terms. 
\end{abstract}
 


\section{Overview}
There are many examples of dissipative systems that admit quasi-adiabatic limits described by curvature driven motion of interfaces. The Allen Cahn equation is a seminal example \cite{bib:Ilmanen93, bib:LeeKim15}. The simplest curvature mediated flow is motion by mean curvature (curve shortening), and it admits a curve lengthening bifurcation in which the sign of the curvature coupling changes. This sign shift transitions the system from a curve shortening regime to motion {\bf against} curvature, also called curve lengthening, regularized by higher-order Willmore-type effects, such as curvature surface diffusion. These higher order operators are required for the reduced model to remain locally well posed. There are several examples of curve lengthening flows arising as gradients of system energies \cite{Bellettini10, bib:CP23, bib:DP15}.  Motion by curvature generically reduces front length, leading either to flat fronts or homogeneous single-phase solutions. The curve lengthening bifurcation leads to complex transients that generically lead to self-intersection of the interface, representing an abrupt change in the structure of the long-term attractor of these nonlinear hyperbolic systems, \cite{bib:GT87}.

Prior work  by the authors analyzed the curve lengthening bifurcation in the parametric nonlinear Schr\"odinger (PNLS) system, \cite{bib:PR24}. This is a dispersive system that is reduced from a class of optical parametric oscillator (OPO) systems in a large detuning limit \cite{bib:OPO5, bib:OPO2,  bib:PK,  bib:TOWG, bib:Trillo97}. 
These optical parameter amplifiers use partially mirrored saturating absorbers or phase sensitive amplifiers at each end of the optical cavity to select a preferred phase. The physical cavities often support a several distinguished cavity waves, mode locking synchronizes the multiple modes to obtain resonant effects, \cite{bib:Ham25}.
These systems have gained popularity as tools for the generation of ultra-short pulses \cite{bib:Huss19}. 

The nonlinear Schr\"odinger type evolution models describe the return-map corresponding to the electric field at the partially mirrored end after  passage through the cavity. These systems are naturally posed in 1+2D dimensions. The phase sensitive amplification and damping can be modeled within an NLS type system for a complex valued phase function $u:\Omega\times\mbbR_+\mapsto\mbbC,$
where $\Omega\subset\mbbR^2$ is a bounded domain. The evolution takes a general form
\beq \label{e:GenModeFilter}
u_t+ \mrA(|u|^2)u+\mrB(|u|^2)u^*=0, \eeq
where, given $u$, $\mrA(|u|^2)$ and $\mrB(|u|^2)$ are intensity dependent linear operators that model the interaction of the field $u$ and its complex conjugate $u^*.$  The phase invariance that generically accompanies propagation in NLS systems has been broken by the complex conjugate term, promoting the stability of standing waves.

\begin{figure}[t]
\vspace{-0.9in}
\centering
\includegraphics[height=1.8in]{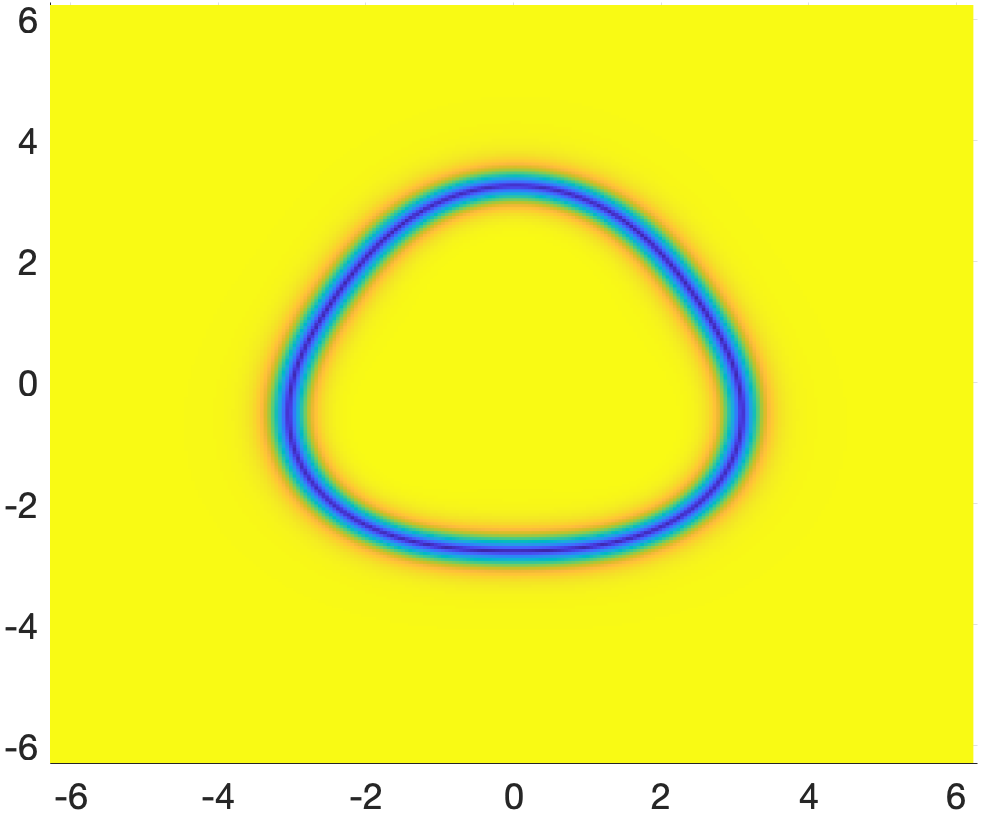}
\includegraphics[height=1.8in]{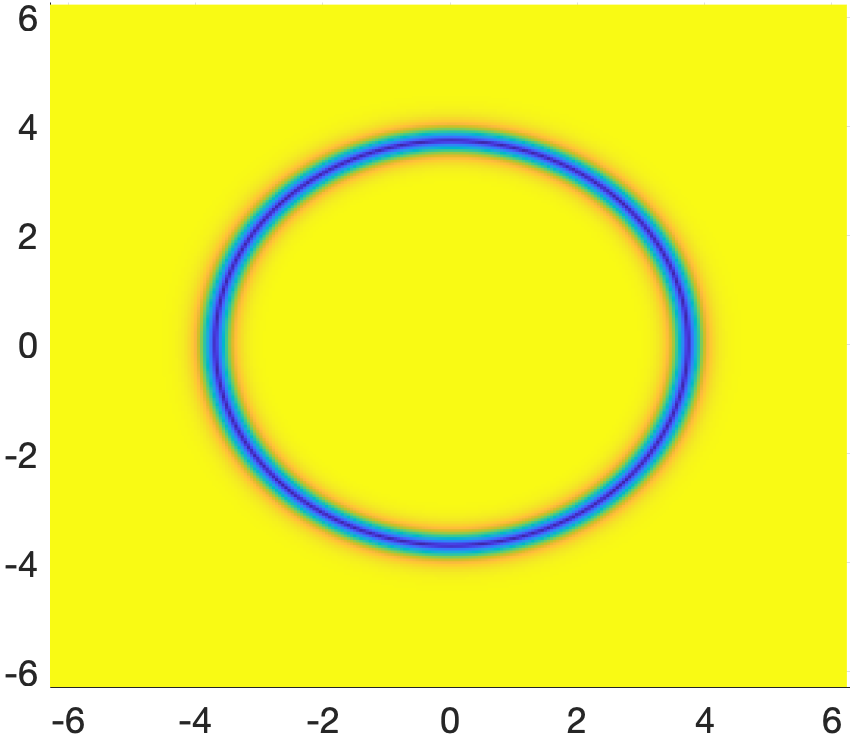} 
\includegraphics[height=1.8in]{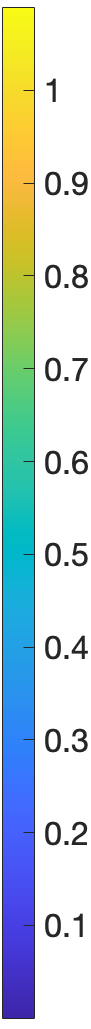}
 \put(-315, 12){\large\textbf{$t=0$}}
  \put(-155, 12){\large\textbf{$t=10^4$}} \\[5mm]
\includegraphics[height=1.6in]{Mu-503T0c.png}
\includegraphics[height=1.6in]{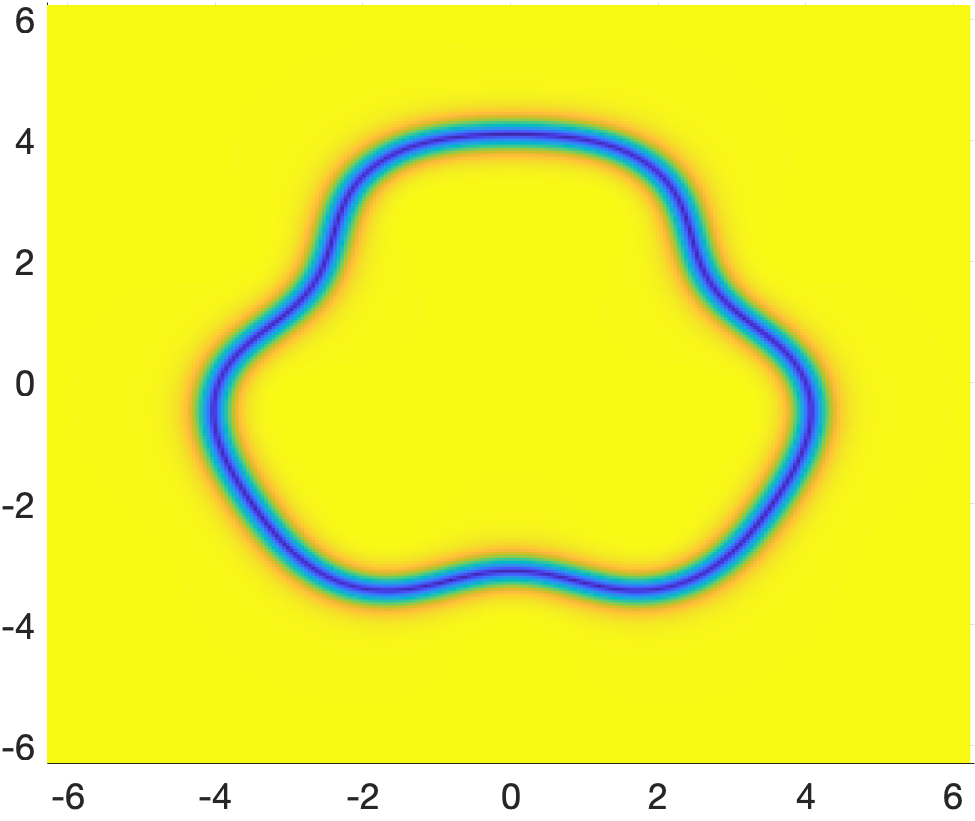}
\includegraphics[height=1.6in]{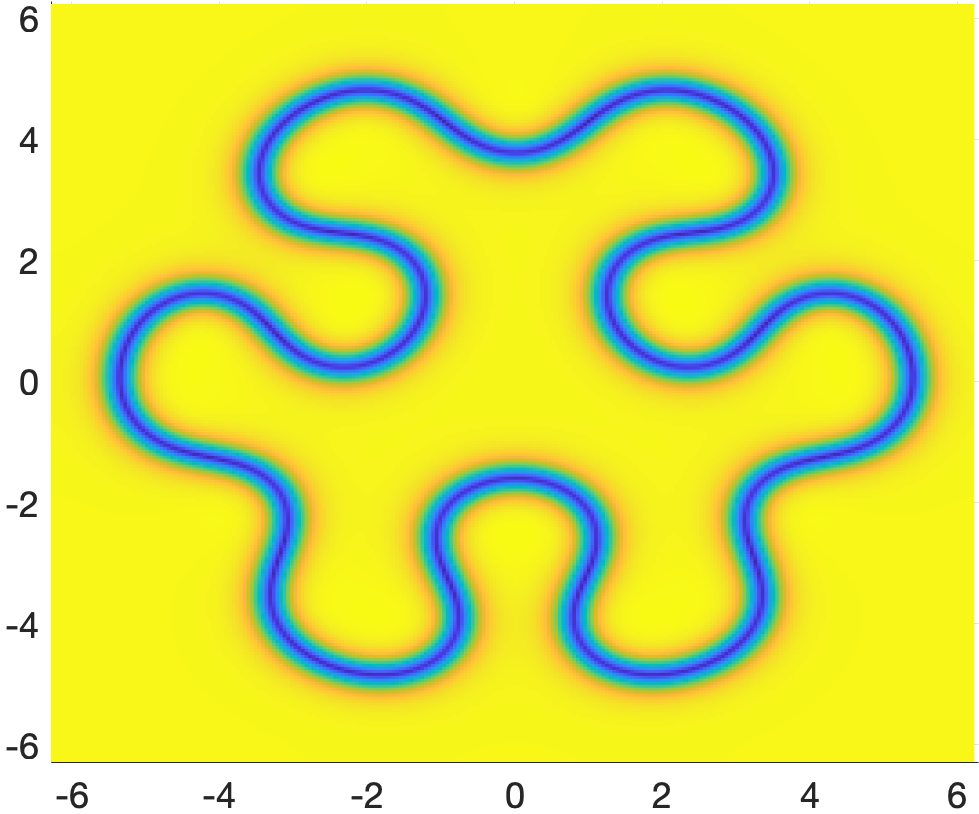}
 \put(-415, 10){\large\textbf{$t=0$}}
  \put(-271, 10)
  {\large\textbf{$t=310$}}
   \put(-131, 10){\large\textbf{$t=520$}}
 \\
\caption{Contour plots of the modulus $|U|$ over $[-2\pi,2\pi]^2$ simulated from the PNLS version of \eqref{e:MFPNLS} from the same initial data. The top row  corresponds to parameters that induce motion by curvature $\mu>0$. In the bottom row the parameters induc motion against curvature ($\mu<0)$. Reprinted with permission from \cite{bib:PR24}.}
\label{f:mu>0}
\end{figure}
We investigate the features of the maps $\mrA$ and $\mrB$ in \eqref{e:GenModeFilter} that lead to a curve lengthening bifurcation associated to a front solution.  When normalized so that the selected phase is purely real, the system can instructively be written as a vector system for the real and imaginary components
\beq\label{e:MFPNLS}
U_t =\mrF(U):=\begin{pmatrix} 0 & \cN_-(|U|^2;\mu)  \cr
          -\cN_+(|U|^2)& -\cM(|U|^2) \end{pmatrix} U,\eeq
with a bifurcation parameter $\mu\in\mbbR$ included in $\cN_-.$
For $\cM=0$ this is a classical NLS type system and for $\cM=\beta \mrI$ with $\beta>0$ this is a PNLS type system. The operator families $\cN_\pm$ are nonlinear maps from $U\in L^2(\Omega)$ into linear operators from $H^2(\Omega)$ to $L^2(\Omega)$. We consider self-adjoint, second-order operators that balance dispersion and nonlinearity, \beq\label{e:cN_expression}
\begin{aligned}
\cN_-(U;\mu)&= -\eps^2 \Delta +g_-(|U|^2;\mu),\\
\cN_+(U)&=-\eps^2\Delta + g_+(|U|^2),
\end{aligned}
\eeq
via smooth functions $g_\pm:\mbbR\mapsto\mbbR$  that model the nonlinear features of the cross-talk between real and imaginary components. With this normalization the operators are predominantly positive, with finite dimensions of negative eigenspaces.  A primary assumption is that the system supports a 1D heteroclinic (front) solution $\phi$ with odd parity in the `up' equation, solving
\beq\label{e:phi-eq} \cN_+(|\phi|^2)\phi:=\left(-\partial_z^2+g_+(\phi^2)\right)\phi=0,\eeq
in a rescaled $z=x/\eps$ 1D variable. This solution forms the ``up'' front profile
\beq\label{e:up-front}
\Phi(z)=\begin{pmatrix}
    \phi(z)\cr 0
\end{pmatrix}.
\eeq
Without loss of generality we assume $\phi$ connects $1$ to $-1.$  The 1D construction is relevant as the Frenet variables  associated to an interface in $\Omega$ convert position $x\in\Omega$ to scaled distance $z=z(x)$ to the interface. The Frenet variables map a curved interface into a flat interface.  We assume that both $\cN_\pm$ have positive far field limits, with $g_{\pm}(1)>0,$ uniformly for $|\mu|$ sufficiently small. We assume that $\phi$ is the ground state of its defining system. More specifically, we assume the linearization of \eqref{e:phi-eq} about $\phi$ yields a non-negative operator with kernel spanned by the symmetry eigenfunction $\phi'$.  We assume that the self-adjoint operator $\cN_-(\phi;\mu)$ is strictly positive for $\mu>0$ and is positive save for a  one-dimensional negative ground state for $\mu<0.$ The arrival of the negative mode in $\cN_-$ is the mechanistic trigger of the curve lengthening bifurcation.

\subsection{Modally Filtered Schr\"odinger Equation (MPSE)}

We identify a class of linear operators $\cM$, generalizing the identity, that preserve the curve lengthening bifurcation.
The operator $\cM$ breaks the phase invariance, introducing a self-interaction for the `down'  or purely imaginary, component of the wave. 
The curve lengthening bifurcation requires that the curvature motion bifurcates with the change
in the negative index of $\cN_-(\phi^2;\mu)$ while the  front $\Phi$ remains linearly stable.  These are competing effects that require cooperation between the operators $\cN_-(\phi^2)$ and $\cM(\phi^2).$ We identify a family of operators $\cM(U)$ that preserve the structure of this bifurcation. The operators are self-adjoint, uniformly positive, bounded, and positivity preserving with respect to $\cN_-^{-1}(U)$. More specifically, given a finite codimension subspace $S\subset\cD(\cN_-)$,  if the constrained bilinear form induced by $\cN_-$ at the front profile, is positive 
\[\left\langle \cN_-^{-1}(\phi^2)u, u\right\rangle>a\Vert u\Vert_{L^2}^2,\]
for all $u\in S$ and some $a>0$ (see \eqref{e:ConstraintOp}), then positivity preservation implies that the constrained bilinear from associated to the product $\cN_-^{-1}\cM$ also satisfies
\[\left\langle \cN_-^{-1}(\phi^2)\cM(\phi^2) u,u\right\rangle >a\Vert u\Vert^2_{L^2},\]
for all $u\in S.$

 Positively preservation is easiest to deduce when the operators $\cN_-(\phi^2)$ and $\cM(\phi^2)$ commute, and hence the operators have the same eigenmodes.  While this is a strong assumption, the spectral mapping theorem leaves a relatively broad choice of operators $\cM(|U|^2;\mu)=\cS(\cN_-(|U|^2;\mu))$ in terms of smooth functions $\cS:\mbbR\mapsto \mbbR$ that have extensions to  meromorphic functions $\cS:\mbbC\mapsto\mbbC$. The spectral mapping theorem extends $\cS$ to a map on self-adjoint linear operators. 
 We call a system in the form \eqref{e:MFPNLS} a Modal PNLS system if $\cN_\pm$ and  $\cM$ satisfy these assumptions. The word modal is motivated by the fact that the down-up (imaginary to real) and down-down (imaginary to imaginary) filters share eigenmodes. 
 
 The primary result is to establish conditions on $\cS$ such that associated modal PNLS systems preserve the curve lengthening bifurcation. To this end we fix $\mu_*>0$ and assume there exists $a_\pm\in\mbbR$ for which 
 \beq\label{e:g_lower}
 \inf_{s\in\mbbR_+}g_\pm(s)> a_\pm,
 \eeq
 uniformly for $|\mu|\leq \mu_*$. This implies that $\sigma(\cN_-(|U|^2;\mu))\subset [a_-,\infty)$ for all $|\mu|<\mu_*$ and all $U\in L^2(\Omega).$  Our main result is the following.
 \begin{MR}
 \label{MR:1}
Assume the conditions on $\cN_\pm$ above hold. 
Suppose that exists $0<\beta_-<\beta_+<\infty$ such that the spectral map $\cS$ satisfies
\begin{align}
\label{e:cC-bds}
 &\cS:[a_-,\infty) \mapsto [\beta_-,\beta_+],\\
    \label{e:cC-nondecr}
    &\cS'(s) \geq 0, \hspace{0.2in} \textrm{for}\,\textrm{all}\, s\in[a_-,\infty),
 \end{align}
 and $\cS$ has an analytic extension to a complex neighborhood of $[a_-,\infty)$. Then there exists $\mu_*>0$ such that the system \eqref{e:MFPNLS} satisfies the curve lengthening bifurcation in $\mu$ for $|\mu|<\mu_*$. As shown in Theorem\,\ref{t:1D_coercive}, the front solution $\phi$ is linearly stable within the 1D system and $\Phi(\cdot;\gamma)$-front solutions in the 1+2D system evolve through the normal velocity
\beq\label{e:nV_expression}
\mrV=-\alpha_1 \kappa_0 +\eps^2(\nu \Delta_s \kappa_0 +\alpha_3 \kappa_0^3) + O(\eps^3),
\eeq
in the limit $\eps\to 0^+.$ Here $\kappa_0$ is the curvature and $\Delta_s$ is the Laplace-Beltrami operator associated to the interface, and $\nu=\nu(\mu)$, given by \eqref{e:nu} is uniformly positive on $|\mu|<\mu_*$. The coefficient $\alpha_1=\alpha_1(\mu)$, defined in \eqref{e:alpha0}, is smooth in $\mu$ and changes sign from positive to negative as $\mu$ decreases through zero.
\end{MR}
The essential analytic point is that the map $\cM$ is positivity enhancing with respect to $\cN_-^{-1}$ due to the monotonicity of the map $\cS.$ The linear stability is based upon analysis of rational produces to bilinear forms acting on constrained spaces. This is combined with a formal matched asymptotic expansion of the 1+2D dispersive system about the front profile in Frenet variables. A matching to a trivial outer solution yields the normal velocity expression \eqref{e:nV_expression}.  
 The global constraints on $\sigma(\cN_-)$ and the assumptions on $\cS$ allow a spectral representation for $\cM(|U|^2),$
\beq\label{e:Lap-M}
\cM(|U|^2):=\cS(\cN_-(|U|^2;\mu))= \frac{1}{2\pi i} \int_C \cS(\lambda) (\lambda-\cN_-(|U|^2))^{-1} \, \mrd \lambda,\eeq
where the contour $C\subset\mbbC$ lies within the complex domain of analyticity of $\cC$ and contains $\sigma(\cN_-(|U|^2))$ but none of the singularities of the complex extension of $\cS.$ 

\begin{remark}
Beyond the example $\cS\equiv 1$ in the PNLS system, there are many non-trivial examples of spectral maps $\cS$ that satisfy the conditions outlined in the Main Result. These include the  rational polynomial
\[ \cS(s)=\beta_-+(\beta_+-\beta_-)\frac{s}{2a_-+s},\hspace{0.5in} s\in(-2a_-,\infty), \]
which is increasing with the required range, and has a natural  meromorphic extension to all of $\lambda\in\mbbC$ with a simple pole at $\lambda=-2a_-.$ Similarly hyperbolic functions of the form 
\[ \cS(s)=\beta_-+(\beta_+-\beta_-)\frac{e^s}{1+e^s},\hspace{0.5in} s\in\mbbR, \]
have the prescribe range and are increasing on $\mbbR$. Its extension to $\mbbC$ is analytic except for simple poles at $\lambda=(2k+1)\pi i$ for $k\in\mbbZ.$
\end{remark}
\subsection{Notation}
A prime $\prime$ acting on a function of one variable denotes the derivative with respect to that variable, e.g.
\[ \phi'(z) =\frac{\mrd\phi}{\mrd z}.\]
The usual $L^2$ complex inner product over the real line, $\mbbR$ is denoted
\beq \label{e:L2-inner}
\begin{aligned}
\langle u, v\rangle&=\int_\mbbR u(z)v^*(z)\,\mrd z,
\end{aligned}
\eeq
where $*$ is complex conjugation. The associated norm
$\Vert u\Vert_{L^2}=\sqrt{\langle u, u\rangle}$.

\section{Spectral Analysis in 1D}

From the form of the front solution \eqref{e:up-front}, the linearization of $\mrF$ at $\Phi$ is the $2\times 2$ matrix of linear operators 
\beq
\label{e:cL-def}
\cL  = \begin{pmatrix}
0 & \cN_-(\Phi;\mu)  \cr
-\cN_+'(\Phi)    & -\cM(\Phi) \end{pmatrix},
\eeq
where we have introduced
\[   \cN_+'(\Phi) :=\cN_+(\Phi)+2g_+'(\phi^2). \]
We restrict to 1D, rescale $z=x/\varepsilon$ and consider the problem on the unbounded domain $\mbbR$. 
When restricted to 1D the linearized operator $\cL$ reduces to
\be
\label{e:mrL-def} 
\mrL = \begin{pmatrix} 0 & \mrD \cr -\mrC& -\cS_\mrD\end{pmatrix},
\ee
with the 1D restrictions of $-\cN_-(\phi^2)$, $\cN_+'(\phi^2),$ and $\cM(\phi^2)$ denoted by
\be \label{e:CD-def}
\begin{aligned}
  \mrC &= -\partial_z^2 +g_+(\phi^2),\\
  \mrD &= -\partial_z^2 +g_-(\phi^2;\mu)\\
\cS_\mrD&:=\cS(\mrD).\end{aligned}
  \ee
   Since $\phi$ is an odd-parity heteroclinic solution of \eqref{e:phi-eq} on $\mbbR$ operator $\mrC$ has an even parity translational kernel spanned by $\phi'$, while  $\phi^2$ has even parity. The assumptions on $\cN_\pm(\phi^2)$ impose the following features on the spectrum of the self-adjoint operators $\mrC$ and $\mrD.$
There exists $\omega,\mu_*>0$ such that for all $|\mu|<\mu_*$ we have
\be
\ba{rcl} \{\Re\lambda<\omega\}\cap\sigma_p(\mrC)&=&\{ (0,\phi^\prime)\},\\
 \{\Re\lambda<\omega\}\cap \sigma_p(\mrD)&=&\{(\lambda_\mrD(\mu),\psi(\mu))\},\ea
 \label{e:CDspec}.\ee
 where $\lambda_\mrD(\mu)$ is smooth in $\mu$, positive for $\mu>0$ and negative for $\mu<0.$
 The ground state eigenfunction $\psi$ for $\mrD$ is non-negative and scaled to have $\Vert \psi\Vert_{L^2}^2=1.$
 
 \begin{lemma}\label{l:Kernel_L}
There exists $\mu_*>0$ such that kernel of $\mrL$ is simple for all $|\mu|\leq \mu_*.$  For $\mu\neq0$ the kernel of $\mrL$ and of its adjoint $\mrL^\dag$ are spanned by the vectors
\be \Psi_0 = \begin{pmatrix}\phi^\prime \cr 0\end{pmatrix}, \quad\quad
    \Psi_0^\dag = \begin{pmatrix} \mrD^{-1}\cS_\mrD\phi^\prime \cr \phi^\prime\end{pmatrix},
\label{e:Psi0}\ee
respectively. For $\mu=0$ the kernels of $\mrL$ and its adjoint are spanned by 
 \be \Psi_0 = \begin{pmatrix}\phi^\prime \cr 0\end{pmatrix}, \quad\quad
    \Psi_0^\dag = \begin{pmatrix}\psi \cr 0 \end{pmatrix}.
\label{psi0mu0}\ee
\end{lemma}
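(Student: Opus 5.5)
The approach is direct: solve $\mrL\Psi=0$ and $\mrL^\dag\Psi^\dag=0$ componentwise, using only the spectral information \eqref{e:CDspec} and the spectral mapping property of $\cS_\mrD=\cS(\mrD)$. The key facts I will rely on are the following.
\begin{itemize}
\item $\mrC$, $\mrD$ and $\cS_\mrD$ are self-adjoint, so $\mrL^\dag=\begin{pmatrix}0&-\mrC\cr \mrD&-\cS_\mrD\end{pmatrix}$.
\item The essential spectra of $\mrC,\mrD$ lie in $[\min(g_\pm(1)),\infty)$, bounded away from $0$. Hence both operators are Fredholm of index zero on $L^2$ with domain $H^2$.
\item $\mrD$ is invertible exactly when $\lambda_\mrD(\mu)\neq 0$, i.e. when $\mu\neq0$.
\item $\cS_\mrD\psi=\cS(\lambda_\mrD)\psi$. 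At $\mu=0$ this gives $\cS_\mrD\psi=\cS(0)\psi$ with $\cS(0)\ge\beta_->0$ by \eqref{e:cC-bds}.
\end{itemize}

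\textbf{Case $\mu\neq 0$.} For $\mrL(u,v)^T=0$ the first row reads $\mrD v=0$, so $v=0$ by invertibility. The second row then gives $\mrC u=0$, so $u\in\mathrm{span}\{\phi'\}$ by \eqref{e:CDspec}; this yields $\Psi_0$.

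For $\mrL^\dag(p,q)^T=0$ the first row gives $\mrC q=0$, so $q=a\phi'$. The second row gives $\mrD p=a\cS_\mrD\phi'$, hence $p=a\mrD^{-1}\cS_\mrD\phi'$. This is in $H^2$ because $\cS_\mrD$ is bounded and $\mrD^{-1}:L^2\to H^2$. This yields $\Psi_0^\dag$, and both kernels are one-dimensional.

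\textbf{Case $\mu=0$.} Now $\ker\mrD=\mathrm{span}\{\psi\}$, so the first row of $\mrL(u,v)^T=0$ gives $v=c\psi$. The second row becomes $\mrC u=-c\,\cS(0)\psi$, and by the Fredholm alternative this is solvable only if $c\,\cS(0)\langle\psi,\phi'\rangle=0$.

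The crux is that $\langle\psi,\phi'\rangle\neq0$. By \eqref{e:CDspec}, $\phi'$ is the ground state of $\mrC$, so it is of one sign (nonpositive, since $\phi$ connects $1$ to $-1$) and nontrivial. Likewise $\psi\ge0$ is the nontrivial ground state of $\mrD$. Both are even, smooth and positive/negative a.e., so their inner product is strictly nonzero. Together with $\cS(0)>0$, this forces $c=0$, and then $u\propto\phi'$ as before.

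For the adjoint, $q=a\phi'$ again, and $\mrD p=a\,\cS_\mrD\phi'$ requires $a\langle\cS_\mrD\phi',\psi\rangle=a\,\cS(0)\langle\phi',\psi\rangle=0$, using self-adjointness of $\cS_\mrD$. The same sign argument gives $a=0$, so $\mrD p=0$ and $p\propto\psi$. This yields \eqref{psi0mu0}.

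\textbf{Simplicity and the main obstacle.} Simplicity for all $|\mu|\le\mu_*$ then follows: in each case the kernel is one-dimensional, with $\mu_*$ chosen so that \eqref{e:CDspec} holds. The main point needing care is the non-orthogonality $\langle\psi,\phi'\rangle\neq0$, which relies on both operators being Schr\"odinger operators whose ground states have a strict sign. If a ground state were only non-negative rather than strictly positive, I would invoke the standard Sturm–Liouville fact that the ground state of $-\partial_z^2+V$ on $\mbbR$ never vanishes.
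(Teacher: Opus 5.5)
Your identification of both kernels is correct. It is in fact more careful than the paper, which simply asserts \eqref{e:Psi0} and \eqref{psi0mu0}, and your use of $\langle\phi',\psi\rangle\neq0$ at $\mu=0$ is exactly right. The gap is in the word ``simple.'' The paper's proof makes explicit that it means \emph{algebraic} simplicity of the eigenvalue $0$: there is no generalized eigenvector, i.e.\ $\mrL^2P=0$ forces $P\in\mathrm{span}\{\Psi_0\}$, equivalently $\mrL P=\Psi_0$ has no solution. Your last paragraph only shows $\dim\ker\mrL=1$, which holds under \eqref{e:CDspec} with no further smallness of $\mu$. The $\mu_*$ in the statement is there because the algebraic property is genuinely a small-$|\mu|$ fact. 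It is also the version used later: $\alpha_1$ in \eqref{e:alpha0} has $\langle\Psi_0,\Psi_0^\dag\rangle=\langle\mrD^{-1}\cS_\mrD\phi',\phi'\rangle$ in its denominator, and \eqref{e:solvability_V0} cannot be solved for $\mrV_0$ if this pairing vanishes.

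The missing step fits your componentwise framework. Writing out $\mrL P=\Psi_0$ gives $\mrD P_2=\phi'$ and $\mrC P_1=-\cS_\mrD P_2$. At $\mu=0$ the first equation is already unsolvable, because $\langle\phi',\psi\rangle\neq0$. For $\mu\neq0$ one gets $P_2=\mrD^{-1}\phi'$, and the second equation is solvable iff $\langle\mrD^{-1}\cS_\mrD\phi',\phi'\rangle=0$, so you must show this pairing is nonzero. For $\mu>0$ it is positive, since $\mrD^{-1}\cS_\mrD$ is a product of commuting positive operators. For $\mu<0$ the operator is indefinite and cancellation is not excluded a priori. Splitting spectrally,
\[
\langle\mrD^{-1}\cS_\mrD\phi',\phi'\rangle=\frac{\cS(\lambda_\mrD)}{\lambda_\mrD}\langle\phi',\psi\rangle^2+\int_{\rho>\omega}\frac{\cS(\rho)}{\rho}|\hat{\phi'}(\rho)|^2\,\mrd\rho,
\]
the integral is bounded by $\beta_+\Vert\phi'\Vert^2/\omega$ uniformly in $\mu$. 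The first term is at most $\beta_-\langle\phi',\psi\rangle^2/\lambda_\mrD$, which tends to $-\infty$ as $\lambda_\mrD\to0^-$, with $\langle\phi',\psi\rangle$ bounded away from zero by continuity in $\mu$. Choosing $\mu_*$ small therefore makes the pairing strictly negative. This dominance of the $\lambda_\mrD^{-1}$ ground-state contribution, which the paper expresses through \eqref{e:Dinv}, is the idea your proposal lacks.
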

\begin{proof}
    
The simplicity of the kernel is equivalent to the solvability of $\cL^2 P=0$ with $P\neq \Psi_0.$ For $\mu\neq 0$ the Fredholm conditions for solvability yield  non-trivial solutions only if $\langle \cS_\mrD\mrD^{-1}\phi', \phi'\rangle =0.$
For $|\mu|$ small we have the asymptotic inverse formulas
\be
\begin{aligned}
\label{e:Dinv} 
\mrD^{-1} \phi'&= \frac{1}{\lambda_\mrD}\frac{\langle \phi',\psi\rangle}{\Vert\phi'\Vert_{L^2}}\psi+O(\mu^0),\\
\mrD^{-1}\cS_\mrD \phi'&= \frac{\cS(\lambda_\mrD)}{\lambda_\mrD}\frac{\langle \phi',\psi\rangle}{\Vert\phi'\Vert_{L^2}}\psi+O(\mu^0),
\end{aligned}
\ee 
where $O(\mu^0)$ indicates terms that remain uniformly bounded as $\mu\to 0.$ We deduce that
\[ \langle \cS_\mrD\mrD^{-1}\phi', \phi'\rangle =\frac{\cS(\lambda_\mrD)\langle \phi',\psi\rangle^2}{\lambda_\mrD\Vert\phi'\Vert_{L^2}}+O(\mu^0).\]
 Since both $\phi'$ and $\psi$ are non-negative ground state eigenfunctions, their inner product cannot be zero. 
\end{proof}
When scaled to have unit norm, the eigenfunctions of $\mrL$ have smooth dependence on $\mu.$
The inverse of $\mrL$ is given by
\be \label{e:Linv-gen} \mrL^{-1}=\begin{pmatrix}- \mrC^{-1} \cS_\mrD \mrD^{-1} & -\mrC^{-1}\cr
                        \mrD^{-1} & 0\end{pmatrix}.\ee

\subsection{Essential Spectrum of $\mrL$}
Consider the asymptotic operator $\mrL_\infty$ such that $\mrL-\mrL_\infty$ is relatively compact compared to $\mrL_\infty.$
If we assume that $\phi^2\to 1$ as $z\to\pm\infty$, then the associated limiting operators are
\[\begin{aligned} 
\mrC_\infty&=-\partial_z^2+g_+(1),\\
\mrD_\infty&= -\partial_z^2 +g_-(1;\mu),\\
\cS_\infty&=\cS(\mrD_\infty).
\end{aligned}\]
A function $W=e^{\lambda t+ikx}V$ for some $V\in\mbbR^2,$ and $\lambda\in\mbbC$ solves the linear system
\[ \partial_t W = \mrL_\infty W, \]
when $(\lambda,k)$ satisfy the dispersion relation
\[ \det 
\begin{pmatrix} 
-\lambda & k^2 +g_-(1;\mu) \cr
-k^2 -g_+(1) & \cS(k^2+g_-(1,\mu))-\lambda \end{pmatrix}=0.\]
This defines the Fredholm boundaries $\lambda_\mrF$ that comprise the essential spectrum of $\mrL_\infty,$
\[ \lambda_\mrF(k) := -\frac12\left( \cS\Bigl(k^2+g_-(1)\Bigr)\pm \sqrt{\left(\cS\bigl(k^2+g_-(1)\bigr)\right)^2-4(k^2+g_+(1))(k^2+g_-(1))}\right).\]
The Fredholm boundaries coincide with the essential spectrum  $\sigma_{\rm ess}(\mrL_\infty)=\{ \lambda_{\mrF}(k)\,
\bigl |\, k\in\mbbR\} $. This complex set has a genuinely complex component satisfying 
\[ \Re\lambda_\mrF(k) =-\frac12 \cS(k^2+g_-(1)),\] 
and a purely real component that satisfies
\[ \lambda_\mrF(k)\leq -\frac{(k^2+g_+(1))(k^2+g_-(1))}{\cS(k^2+g_-(s))}.\] 
From the bounds \eqref{e:cC-bds} on $\cS$ we deduce that $\sigma_{\rm ess}(\mrL_\infty)\subset \{\Re \lambda\leq -\lambda_{\rm M, ess}\},$ where  
\beq\label{e:Lam_Mess} \lambda_{\rm M, ess} := \min\left\{\frac{\beta_-}{2}, \frac{g_+(1)g_-(1;\mu)}{\beta_+}\right\}>0.\eeq 
In particular, the Fredholm boundaries reside strictly in the negative real part complex plane.
If $\cS=\beta$ is a multiple of the identity, then this reduces the bounds in \cite{bib:PR24} with $\beta_-=\beta_+=\beta>0$.

\begin{lemma}
\label{l:Weyl}
The operator $\mrL$ is a relatively compact perturbation of $\mrL_\infty.$ The two operators have the same essential spectrum, in particular 
\beq \label{e:Weyl}
\sigma_{\rm ess}(\mrL)\subset \left\{\Re \lambda \leq - \lambda_{\rm M, ess}\right\}.
\eeq
\end{lemma}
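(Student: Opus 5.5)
The plan is to write $\mrL-\mrL_\infty$ as a matrix of operators and show that each entry is relatively compact with respect to $\mrL_\infty$. Weyl's theorem for relatively compact perturbations then gives $\sigma_{\rm ess}(\mrL)=\sigma_{\rm ess}(\mrL_\infty)$. The inclusion \eqref{e:Weyl} follows from the dispersion-relation computation preceding the lemma, which places the Fredholm boundaries in $\{\Re\lambda\le-\lambda_{\rm M,ess}\}$.

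For the off-diagonal entries we have
\[\mrD-\mrD_\infty=g_-(\phi^2;\mu)-g_-(1;\mu),\qquad \mrC-\mrC_\infty=g_+(\phi^2)-g_+(1)\]
(with the extra potential term $2g_+'(\phi^2)$ if one uses $\cN_+'$). Each is multiplication by a smooth bounded function that decays as $|z|\to\infty$, because $\phi^2\to1$ exponentially for a heteroclinic front. Multiplication by such a function is compact from $H^2(\mbbR)$ to $L^2(\mbbR)$ by Rellich on bounded intervals combined with the decay at infinity. Since $\mrL_\infty$ is a constant-coefficient operator, its domain is $H^2\times H^2$ with graph norm equivalent to the $H^2$ norm, which one checks with the symbol $\det$ computed above. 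Hence these entries are $\mrL_\infty$-compact.

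The main obstacle is the nonlocal diagonal entry $\cS_\mrD-\cS_\infty=\cS(\mrD)-\cS(\mrD_\infty)$. Here I will use the contour representation \eqref{e:Lap-M} together with the second resolvent identity:
\[\cS(\mrD)-\cS(\mrD_\infty)=\frac{1}{2\pi i}\int_C \cS(\lambda)(\lambda-\mrD)^{-1}\bigl(g_-(\phi^2)-g_-(1)\bigr)(\lambda-\mrD_\infty)^{-1}\,\mrd\lambda .\]
For each $\lambda\in C$ the integrand is compact on $L^2$, since the decaying potential composed with the resolvent $(\lambda-\mrD_\infty)^{-1}:L^2\to H^2$ is compact. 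The contour is unbounded, so I will choose it to be a curve around $[a_-,\infty)$ inside the analyticity region, for instance a sector boundary. On such a curve the resolvents decay like $|\lambda|^{-1}$, so the integrand is $O(|\lambda|^{-2})$ in norm because $\cS$ is bounded there. The integral then converges in operator norm. A norm limit of compact operators is compact, so $\cS_\mrD-\cS_\infty$ is compact on $L^2$, and a fortiori relatively compact. If boundedness of the analytic extension of $\cS$ on the contour is unclear, I will instead approximate $\cS$ uniformly on $[a_-,\infty)$ by functions of the form $\sum c_j(\lambda_j-s)^{-1}$ plus a constant. For each such function the difference is compact, as a finite sum of resolvent differences, and the functional calculus converts uniform approximation into norm approximation.

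Combining the three entries, $\mrL-\mrL_\infty$ is $\mrL_\infty$-compact. Weyl's theorem then gives equal essential spectra. Identifying $\sigma_{\rm ess}(\mrL_\infty)$ with the Fredholm boundaries by Fourier transform, and using the bounds \eqref{e:cC-bds}, yields \eqref{e:Weyl}.
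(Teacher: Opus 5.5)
Your proposal is correct and follows the paper's route. It applies Weyl's theorem, uses the $H^2$-smoothing of $(\mrL_\infty-\gamma)^{-1}$ to make the decaying multipliers $g_\pm(\phi^2)-g_\pm(1)$ relatively compact, and treats $\cS(\mrD)-\cS(\mrD_\infty)$ with the second resolvent identity inside the Cauchy integral \eqref{e:Lap-M}. Your handling of that nonlocal term is more careful than the paper's: the paper pulls the potential out to the left as $(\mrD_\infty-\mrD)\mrG$, although it actually sits between two non-commuting resolvents, whereas you keep it sandwiched. Your fallback approximation by constants plus resolvent sums is also sound, because the monotonicity and boundedness of $\cS$ guarantee that $\lim_{s\to\infty}\cS(s)$ exists, which is exactly what uniform approximation on $[a_-,\infty)$ requires.
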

\begin{proof}
We apply the Weyl essential spectrum theorem. This requires that
\beq\label{e:Weyl1}
(\mrL_\infty-\mrL)(\mrL_\infty-\gamma)^{-1}:L^2(\mbbR)\mapsto H^2(\mbbR),
\eeq
is a compact operator. 
The resolvent of $\mrL_\infty$ has the form
\beq\label{e:Linf_resolv}
(\mrL_\infty-\gamma)^{-1}=
\begin{pmatrix}
-\mrH_\infty^{-1}(\cS_\infty+\gamma)\mrD_\infty^{-1} & 
-\mrH_\infty^{-1}\cr 
\mrD^{-1}_\infty\left(1+\gamma\mrH_\infty^{-1}(\cS_\infty+\gamma)\mrD^{-1}_\infty\right)
&
-\gamma \mrD^{-1}_\infty \mrH_\infty^{-1} 
    \end{pmatrix},\eeq
where we have introduced
\beq\label{e:mrH}
\mrH_\infty:=\mrC_\infty +\gamma(\cS_\infty+\gamma)\mrD^{-1}_\infty.
\eeq
Each of the operators has a Fourier multiplier representation. If $\gamma>0$ is sufficiently large then the Fourier multipliers are each strictly positive. The multipliers also grow quadratically in the Fourier parameter as it tends to infinity. These properties make it straightforward to see that each entry of the resolvent is a bounded map from $L^2(\mbbR)$ into $H^2(\mbbR).$ Since the domain $\mbbR$ is unbounded, $H^2(\mbbR)$ is not compactly embedded in $L^2(\mbbR).$ However compactness is recovered for products of operators of the form
$\mrB\, (\mrL_\infty-\gamma)^{-1},$
where $\mrB$ is a smooth $L^\infty$-bounded multiplier operator that decays to zero at infinity sufficiently fast to lie in $L^1(\mbbR)$, see \cite{bib:KP13}[chapter 3].
The differential
\[\mrL_\infty -\mrL=\begin{pmatrix} 0 & \mrD_\infty-\mrD\cr
\mrC_\infty-\mrC & \cS(\mrD_\infty)-\cS(\mrD)\end{pmatrix},\]
is composed of three operators, two of which can be represented as bounded multipliers that satisfy the decay condition since $\phi(z)\to 1$ at an exponential rate at $z\to\pm\infty.$
It remains to characterize the difference of the functional map.  Returning to the spectral  integral representation
\[\begin{aligned}
    \cS(\mrD_\infty)-\cS(\mrD) &= \frac{1}{2\pi i }\int_C \cS(\lambda)\left( (\lambda-\mrD_\infty)^{-1}-(\lambda-\mrD)^{-1}\right) \,\mrd \lambda,\\
    &= \frac{1}{2\pi i }\int_C \cS(\lambda)\frac{\mrD_\infty-\mrD}{(\lambda-\mrD_\infty)^{-1}(\lambda-\mrD)^{-1}} \,\mrd \lambda,\\
 &=   (\mrD_\infty-\mrD)\mrG,
\end{aligned}\]
where $\mrG:L^2(\mbbR)\mapsto H^2(\mbbR)$ is bounded. Since the composition of compact operators with bounded operators yields compact operators, we deduce that the operator in \eqref{e:Weyl1} is compact and $\sigma_{\rm ess}(\mrL)=\sigma_{\rm ess}(\mrL_\infty)$. 
\end{proof}

\subsection{Point Spectrum of $\mrL$}

 To each $\lambda\in\mbbC$ we associate a constraint space that is co-dimension one in $L^2(\mbbR),$
  \beq\label{e:S-def} 
 S_\lambda:=\{s_\lambda\}^\bot
 \hspace{0.5in}\textrm{for}\hspace{0.5in}
 s_\lambda:=\mrD^{-1}(\cS_\mrD +\lambda)\phi'.\eeq
 The components of eigenfunctions of $\mrL$ associated to non-zero point spectrum lie in constraint spaces. 
 \begin{lemma}
 \label{l:Ptspec_constraint}
 Let $\lambda\in\sigma_p(\mrL)$ with eigenfunction $P=(P_1,P_2)$. If $\lambda\neq 0$ then $P_1$ lies in the associated constraint space: $P_1\in S_\lambda.$  As $|\lambda|\to \infty$ then $S_\lambda\to \{\mrD^{-1}\phi'\}^\bot.$
 \end{lemma}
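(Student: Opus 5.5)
The plan is to write out the eigenvalue problem $\mrL P=\lambda P$ componentwise, reduce it to one equation for $P_1$, and then test that equation against the kernel element $\phi'$ of $\mrC$. With $\mrL$ given by \eqref{e:mrL-def}, the two components read
\[
\mrD P_2=\lambda P_1,\qquad -\mrC P_1-\cS_\mrD P_2=\lambda P_2 .
\]
For $\mu\neq 0$ the operator $\mrD$ is invertible by \eqref{e:CDspec}, so the first equation gives $P_2=\lambda\mrD^{-1}P_1$. Substituting into the second equation yields
\[
\mrC P_1=-\lambda(\cS_\mrD+\lambda)\mrD^{-1}P_1 .
\]

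Next I take the $L^2$ inner product with $\phi'$, which spans the kernel of the self-adjoint operator $\mrC$. The left-hand side vanishes, because $\langle \mrC P_1,\phi'\rangle=\langle P_1,\mrC\phi'\rangle=0$. Since $\lambda\neq0$, I may divide by $\lambda$ and obtain
\[
\langle(\cS_\mrD+\lambda)\mrD^{-1}P_1,\phi'\rangle=0 .
\]
The operators $\mrD^{-1}$ and $\cS_\mrD=\cS(\mrD)$ commute by the functional calculus, and both are self-adjoint because $\cS$ is real on $\sigma(\mrD)$. Moving them to the other side gives
\[
\langle P_1,\mrD^{-1}(\cS_\mrD+\lambda^*)\phi'\rangle=0 .
\]
Under the paper's pairing convention, and for real $\lambda$, this is exactly $P_1\perp s_\lambda$, that is, $P_1\in S_\lambda$. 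For complex $\lambda$ I will note that the conjugate appears, and interpret \eqref{e:S-def} with the bilinear (non-conjugated) pairing, or equivalently with $s_{\lambda^*}$.

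For the asymptotic statement I normalize the constraint vector. Dividing by $\lambda$ gives
\[
\lambda^{-1}s_\lambda=\mrD^{-1}\phi'+\lambda^{-1}\mrD^{-1}\cS_\mrD\phi' .
\]
The second term is $O(|\lambda|^{-1})$ in $L^2$, since $\cS_\mrD$ is bounded with norm at most $\beta_+$ by \eqref{e:cC-bds} and $\mrD^{-1}$ is bounded for $\mu\ne0$. The orthogonal complement of a line depends only on its direction, so $S_\lambda=\{\lambda^{-1}s_\lambda\}^\perp$. Moreover $\mrD^{-1}\phi'\neq0$, so the normalized directions converge. It follows that $S_\lambda\to\{\mrD^{-1}\phi'\}^\perp$, for instance in the sense that the orthogonal projections onto these subspaces converge in operator norm.

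The main obstacle is the case $\mu=0$, where $\mrD$ has a kernel spanned by $\psi$ and $\mrD^{-1}$ is not defined. There I would use $\lambda P_1=\mrD P_2\in\operatorname{Ran}\mrD=\{\psi\}^\perp$. I would then either restrict $\mrD^{-1}$ to this range, or treat $\mu=0$ as a limit using the smooth dependence on $\mu$. The second route is delicate because $s_\lambda$ blows up like $\lambda_\mrD^{-1}\psi$, by \eqref{e:Dinv}. After normalization, the constraint then degenerates to $P_1\perp\psi$, which is consistent with the range condition. I would state the lemma primarily for $\mu\neq0$ and handle $\mu=0$ by this limiting remark.
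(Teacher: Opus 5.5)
Your proposal is correct and is essentially the paper's argument: the paper uses $P\perp\Psi_0^\dag$ for $\lambda\neq0$ together with $P_2=\lambda\mrD^{-1}P_1$, which unpacks to exactly your test of the second component against $\phi'\in\ker\mrC$, and it handles the limit by the same rescaling $\lambda^{-1}s_\lambda$. Two of your additions go beyond the paper and are both correct refinements: for non-real $\lambda$ the sesquilinear orthogonality actually reads $P_1\perp s_{\lambda^*}$, which the paper glosses over but which is harmless since $S_\lambda$ is used later only for real $\lambda$; and the restriction to $\mu\neq0$ is one the paper's proof also makes.
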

\begin{proof} The eigenfunctions $P$ associated to point spectrum $\lambda\in \sigma_p(\mrL)$ lie in $L^2(\mbbR)$, and solve
\be \label{e:ptspectrumeq} \lambda P = \begin{pmatrix} 0 &\mrD\cr -\mrC & -\cS_\mrD\end{pmatrix}P.\ee
Assuming that $\lambda\neq 0$ then $P\perp \Psi_0^\dag.$  For $\mu\neq 0$ we combine the orthogonality condition on $P$ with \eqref{e:ptspectrumeq} to deduce that 
$$ P_1\perp \mrD^{-1}(\cS_\mrD+\lambda)\phi' =s_\lambda \hspace{0.5in}\textrm{and}\hspace{0.5in} P_2\perp (\cS_\mrD+\lambda)\phi'.$$
The space $S_\lambda$ is equivalently defined through the scaled version of 
\[s_\lambda=\mrD^{-1}\left(1+\frac{1}{\lambda}\cS_\mrD\right)\phi'.\]
Since $\phi'$ is fixed, the convergence of $S_\lambda$ as $\lambda\to\infty$ is self-evident.
\end{proof}

The operator $\mrL$  is not self-adjoint and its eigenfunctions do not satisfy a direct variational characterization. Nonetheless, the point spectrum can be localized via a nonlinear product of self-adjoint bilinear forms. More specifically,  eliminating $P_2$ from the eigenvalue problem \eqref{e:ptspectrumeq} and taking the complex inner product with $P_1=P_{11}+iP_{12}$ yields the scalar eigenvalue relation 
\begin{equation}\label{e:S_EVP}
\lambda^2\left\langle \mrD^{-1}P_1,P_1\right\rangle+\lambda\left\langle \cS_\mrD \mrD^{-1}P_1,P_1\right\rangle+\left\langle \mrC P_1,P_1\right\rangle=0.
\end{equation}
This is a quadratic equation for $\lambda$ with real coefficients  since $\mrC$, $\mrD$, and $\cS_\mrD \mrD$ are each self-adjoint, real operators.  The quadratic formula characterizes the eigenvalues in terms of bilinear forms,
\beq \label{e:Holyland}
\lambda=-\frac{\left\langle \cS_\mrD \mrD^{-1}P_1,P_1\right\rangle}{2\langle \mrD^{-1}P_1,P_1\rangle}\left(1\pm \sqrt{1-4\frac{\langle \mrD^{-1}P_1,P_1\rangle \langle \mrC P_1,P_1\rangle}{\left\langle \cS_\mrD \mrD^{-1}P_1,P_1\right\rangle^2}}\right).
\eeq
The goal is to characterize settings when the spectrum of $\mrL$, apart from the translational mode at $\lambda=0$, has strictly negative real part. Since $\mrC\geq 0$ the bilinear form
$\langle \mrC P_{11},P_{11}\rangle+\langle \mrC P_{12},P_{12}\rangle\geq 0.$ 
This implies that if $\langle \mrD^{-1}P_1,P_1\rangle<0$ then $\lambda$ is real but can be positive, while if $\langle \mrD^{-1}P_1,P_1\rangle>0$ then $\lambda$ has negative real part. Control of the point spectrum of $\mrL$ resides with control of the bilinear forms. 

One tool is the spectral representation theorem for self-adjoint operators. Let $\mbbE_\rho$  denote the spectral projection of $\mrD$  associated to $\rho\in\sigma(\mrD)$. 
The spectral representation theorem yields the integral representation
$$\mrD^{s}\cS_\mrD P_1 =\int_{\sigma(\mrD)}\cS(\rho) \rho^s\, \mbbE_\rho(P_1)\mrd\rho, $$
where the spectral measure $\mrd\rho$ has delta functions at the point spectrum of $\mrD,$ see \cite{bib:CodLev55}[Theorem 3.2, Chapter 9, Section 3]. In particular, the bilinear form admits the representation
$$\langle\mrD^{s}\cS_\mrD P_1,P_1 \rangle = \int_{\sigma(\mrD)} \rho^s \cS(\rho)|\hat P_1(\rho)|^2\mrd\rho, $$
where $\hat P_1$ represents the mass of the projection of $P_1$ onto the $\rho$ spectral space of $\mrD$, akin to a Fourier coefficient. These satisfy a Plancherel equality
\[ \Vert P_1\Vert_{L^2}^2 = \int_{\sigma(\mrD)} |\hat P_1(\rho)|^2\,\mrd \rho.\]

A second tool is the characterization of the spectrum of a constrained operator. For a self-adjoint operator $\mrO:\cD(\mrO)\mapsto L^2(\mbbR)$ the constrained operator associated to $S_\lambda$ is the operator induced by the bilinear form $\langle \mrO v, v\rangle,$ restricted to act on  $v\in S_\lambda$, see \cite{bib:KP13}[chapter 5]. The constrained operator satisfies
  \beq\label{e:ConstraintOp}
  \mrO\bigl|_{S_\lambda}:= \Pi_\lambda \mrO,
  \eeq 
  where $\Pi_\lambda$ is the orthogonal projection onto $S_\lambda.$ The constrained operator should be understood as a map $ \mrO\bigl|_{S}: S\cap \cD(\mrO) \mapsto S\subset L^2(\mbbR).$ 
  We denote the dimension of the negative eigenspace of a self-adjoint operator $\mrO$ by $n(\mrO)$. Proposition 5.3.1 of \cite{bib:KP13}, see also \cite{bib:KP12}, implies that if $n(\mrO)$ is finite,
  $\mrO$ has no kernel,
  and $S^\bot$ has orthonormal basis $\{s_1,\ldots s_N\}$, then
  \beq \label{e:KP_index}
  n\left(\mrO\bigl|_{S}\right)= n(\mrO)-n(\mrA),
  \eeq
  where $\mrA$ is the $N \times N$ matrix with entries $\mrA_{ij}=\langle \mrO^{-1} s_i, s_j\rangle. $  Some care must be made in applying this result to functions of operators. While the constrained operator $\mrO\bigl|_{S}$ is self-adjoint, the functional mapping of the operator does not commute with the act of constraining. That is $f(\mrO)\bigl|_{S^\bot}\neq f\left(\mrO\bigl|_{S^\bot}\right).$ In particular $\mrO\bigl|_{S^\bot}> 0$ implies $\left(\mrO\bigl|_{S^\bot}\right)^{-1}> 0$ but does not necessarily imply that $\mrO^{-1}\bigl|_{S^\bot}\geq0. $ 
 
  The next Lemma uses the first tool and the assumptions on $\cS$ to establish that the genuinely complex point spectrum of $\mrL$ lie uniformly in the left-half complex plane.

  \begin{lemma}
\label{l:complex-bound}
    Let $\mrD$ satisfy the spectral properties \eqref{e:CDspec} and the map $\cS$ satisfy \eqref{e:cC-bds} with constants $\beta_\pm,$ 
    then the genuinely complex point spectrum of $\mrL$ satisfy
      \beq
      \label{e:Spec1} \sigma_{\rm pt}(\mrL)\backslash\mbbR\subset
      \left\{\Re\lambda<-\frac{\beta_-}{2} \right\}.
      \eeq
    \end{lemma}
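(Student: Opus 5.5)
Our plan is to work directly from the scalar eigenvalue relation \eqref{e:S_EVP} and the quadratic formula \eqref{e:Holyland}. Let $\lambda\in\sigma_{\rm pt}(\mrL)\setminus\mbbR$ with eigenfunction $P=(P_1,P_2)$. We first note that $P_1\neq 0$: if $P_1=0$ then the first row of \eqref{e:ptspectrumeq} gives $\mrD P_2=0$, and for $\mu\neq0$ $\mrD$ has no kernel, so $P=0$. For $\mu=0$ we have $P_2\in\textrm{span}\{\psi\}$, and then the second row gives $\lambda P_2=-\cS_\mrD P_2=-\cS(0)P_2$, which is real; this contradicts $\lambda\notin\mbbR$. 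Next, to eliminate $P_2$ we use $\lambda\neq0$: the first row gives $P_2=\lambda\mrD^{-1}P_1$, interpreted on the range of $\mrD$ as in Lemma \ref{l:Ptspec_constraint}. Substituting into the second row and pairing with $P_1$ produces \eqref{e:S_EVP},
\[ a\lambda^2+b\lambda+c=0,\qquad a=\langle \mrD^{-1}P_1,P_1\rangle,\quad b=\langle \cS_\mrD\mrD^{-1}P_1,P_1\rangle,\quad c=\langle \mrC P_1,P_1\rangle, \]
where $a,b,c\in\mbbR$ because $\mrD^{-1}$, $\cS_\mrD\mrD^{-1}$ and $\mrC$ are real self-adjoint operators that commute appropriately.

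Since $\lambda\notin\mbbR$ solves a real quadratic, we must have $a\neq0$, and $\lambda$ and $\lambda^*$ are its two roots. Hence $\Re\lambda=-b/(2a)$, and the discriminant $b^2-4ac$ is negative. Because $c\geq0$ by $\mrC\ge0$, a negative discriminant forces $ac>0$, so $a>0$ (the case $c=0$ would give a real root). It therefore suffices to prove
\[ b>\beta_- a \quad\textrm{whenever } a>0. \]
We obtain this from the spectral representation of $\mrD$:
\[ b-\beta_-a=\int_{\sigma(\mrD)}\frac{\cS(\rho)-\beta_-}{\rho}\,|\hat P_1(\rho)|^2\,\mrd\rho. \]
On the positive part of the spectrum the integrand is nonnegative by \eqref{e:cC-bds}, but the negative eigenvalue $\lambda_\mrD<0$ (when $\mu<0$) contributes a nonpositive term. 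This sign issue is the main obstacle.

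To deal with it, we split off the ground state. We write $a=a_0+a_+$, where $a_0=|\hat P_1(\lambda_\mrD)|^2/\lambda_\mrD$ and $a_+$ is the integral over $[\omega,\infty)$. We write $b=\cS(\lambda_\mrD)a_0+b_+$ in the same way. Monotonicity \eqref{e:cC-nondecr} gives $\cS(\rho)\ge\cS(\omega)\ge\cS(\lambda_\mrD)$ on the continuous part, so $b_+\ge\cS(\omega)a_+$. Then
\[ b\ge \cS(\lambda_\mrD)a_0+\cS(\omega)a_+=\cS(\lambda_\mrD)a+\bigl(\cS(\omega)-\cS(\lambda_\mrD)\bigr)a_+ \ge \cS(\lambda_\mrD)\,a, \]
using $a_+\ge0$. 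Together with $\cS(\lambda_\mrD)\ge\beta_-$ and $a>0$, this yields $\Re\lambda=-b/(2a)\le-\cS(\lambda_\mrD)/2\le-\beta_-/2$. The monotonicity of $\cS$ is exactly what makes $\cN_-^{-1}\cM$ positivity preserving here, and it is why the negative mode does not spoil the bound.

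To obtain the strict inequality claimed in \eqref{e:Spec1}, we use that $a_+>0$ unless $P_1\in\textrm{span}\{\psi\}$, in which case $\lambda$ would be real. Equality would also require $\cS(\lambda_\mrD)=\beta_-$ together with $\cS$ constant, which is the borderline PNLS case. If strictness turns out to fail there, we would record the bound as $\Re\lambda\le-\beta_-/2$, or else use $\inf\cS>\beta_-$ on the compact spectral range.
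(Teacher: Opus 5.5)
Your proposal is correct and follows the paper's route. Both arguments take $\langle\mrD^{-1}P_1,P_1\rangle>0$ and $\Re\lambda$ from the real quadratic \eqref{e:S_EVP}, split the spectral integral at the ground state $\lambda_\mrD$, and use monotonicity of $\cS$ to get $\langle\mrD^{-1}\cS_\mrD P_1,P_1\rangle\ge\cS(\lambda_\mrD)\langle\mrD^{-1}P_1,P_1\rangle$.

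You are right to keep $\Re\lambda=-b/(2a)$; the paper's proof drops the factor $2$. Consequently the argument only gives $\Re\lambda\le-\cS(\lambda_\mrD)/2\le-\beta_-/2$. Strictness then needs $\cS(\lambda_\mrD)>\beta_-$. That holds unless $\cS\equiv\beta_-$: a nonconstant, nondecreasing, analytic $\cS$ is strictly increasing, and $\lambda_\mrD\ge\inf g_->a_-$.
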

\begin{proof}
Let $\lambda\in\sigma_{\rm pt}(\mrL)$ have non-zero imaginary part with eigenvector $(P_1,P_2)$.  From the quadratic formula \eqref{e:Holyland} it follows that $\langle \mrD^{-1}P_1,P_1\rangle>0 $ and 
\[\Re\lambda=-\frac{\langle\mrD^{-1}\cS_\mrD P_1, P_1\rangle}{\langle\mrD^{-1}P_1,P_1\rangle}.\]
We break the spectral integral representation into components associated to positive and negative parts of $\sigma(\mrD),$ 
\[\begin{aligned} \mrD^{-1}\cS_\mrD P_1&=\int_{\sigma(\mrD)}
\frac{\cS(\rho)}{\rho} \mbbE_\rho(P_1)\,\mrd \rho, \\
&= \frac{\cS(\lambda_\mrD)}{\lambda_{\mrD}} \mbbE_{\lambda_\mrD}(P_1)+\int_{\rho>\omega}
\frac{\cS(\rho)}{\rho} \mbbE_\rho(P_1)\,\mrd \rho.
\end{aligned}
\]
Since $\cS$ is increasing and $\omega>0$ we have the spectral representation for the inner product
\beq\label{e:DinvC_BLF}
\begin{aligned}
\langle \mrD^{-1}\cS_\mrD P_1,P_1\rangle&=
\frac{\cS(\lambda_\mrD)}{\lambda_{\mrD}} 
|\hat P_1(\lambda_\mrD)|^2+
\int_{\rho>\omega}
\frac{\cS(\rho)}{\rho} |\hat P_1(\rho)|^2\,\mrd \rho,\\
&\geq \cS(\lambda_\mrD) \left( 
\frac{1}{\lambda_{\mrD}} 
|\hat P_1(\lambda_\mrD)|^2+
\int_{\rho>\omega}
\frac{1}{\rho} |\hat P_1(\rho)|^2\,\mrd \rho
\right),\\
&=\cS(\lambda_D) \langle\mrD^{-1}P_1,P_1\rangle.
\end{aligned}\eeq
By assumption $\langle \mrD^{-1}P_1,P_1\rangle>0$, and bounding $\cS(\lambda_\mrD)$ from below by $\beta_-$ yields the lower bound on $\Re\lambda.$
\end{proof}

We use the second tool to control the real point spectrum.  In general an operator with small negative point spectrum has an inverse iwth large negative spectrum. It is hard to make the constrained inverse operator positive. However in the case at hand the constraint space $S_\lambda\sim \{\psi_\mu\}^\bot$ for $|\lambda_\mrD|\ll 1$. This scaling leads the strength of the constraint to dominate the size of $\lambda_\mrD^{-1}$, and instills positivity of the constrained inverse.

  \begin{lemma}
  \label{l:n(Dinv)}
  There exists $\mu_*>0$ such that $n(\mrD^{-1}\bigl|_{S_\lambda})=0$ for all $\lambda\in\mbbR$ with $\lambda>-\beta_-$ and all $|\mu|\leq \mu_*.$ In particular, for these $\mu$ and $\lambda$ we have
  $\langle \mrD^{-1}P_1,P_1\rangle\geq 0$  and
  \beq\label{e:b/a} \frac{\langle \mrD^{-1}\cS_\mrD P_1,P_1\rangle}{\langle \mrD^{-1}P_1,P_1\rangle}\geq \beta_-,
  \eeq
  for all $P_1\in S_\lambda.$
  \end{lemma}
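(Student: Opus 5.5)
Apply the index formula \eqref{e:KP_index} with $\mrO=\mrD^{-1}$ and the co-dimension one space $S_\lambda=\{s_\lambda\}^\bot$. For $\mu\neq0$, $\mrD^{-1}$ is self-adjoint, bounded, and has trivial kernel. By \eqref{e:CDspec} it has $n(\mrD^{-1})=1$ for $\mu<0$ and $n(\mrD^{-1})=0$ for $\mu>0$. The inverse of $\mrO$ is $\mrD$, so the $1\times1$ matrix in \eqref{e:KP_index} is
\[
\mrA=\langle \mrD s_\lambda,s_\lambda\rangle=\left\langle (\cS_\mrD+\lambda)\phi',\,\mrD^{-1}(\cS_\mrD+\lambda)\phi'\right\rangle .
\]
For $\mu>0$ we have $\mrD^{-1}>0$, so $n(\mrD^{-1}|_{S_\lambda})=0$ is immediate. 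It therefore suffices to show $\mrA<0$ when $\mu<0$, since then $n(\mrD^{-1}|_{S_\lambda})=1-1=0$. The case $\mu=0$ is handled separately, or by continuity: as $\mu\to0$ the constraint space converges to $\{\psi\}^\bot$, on which $\mrD^{-1}$ is positive because $\mrD\geq\omega$ there.

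The key step is the sign of $\mrA$, computed through the spectral representation of $\mrD$ as in the proof of Lemma\,\ref{l:complex-bound}:
\[
\mrA=\frac{(\cS(\lambda_\mrD)+\lambda)^2}{\lambda_\mrD}\,\langle\phi',\psi\rangle^2+\int_{\rho>\omega}\frac{(\cS(\rho)+\lambda)^2}{\rho}\,|\hat{\phi'}(\rho)|^2\,\mrd\rho .
\]
The integral is bounded uniformly in $\mu$ and in $\lambda$ on compact sets, since $\cS\le\beta_+$ and $\rho>\omega$. In the first term, $\lambda>-\beta_-$ and $\cS(\lambda_\mrD)\ge\beta_-$ give $\cS(\lambda_\mrD)+\lambda>0$. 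The factor $\langle\phi',\psi\rangle^2$ is bounded below because both functions are positive ground states, as in Lemma\,\ref{l:Kernel_L}. Hence the first term is of order $-(\cS(\lambda_\mrD)+\lambda)^2/|\lambda_\mrD|$ and dominates once $|\mu|\le\mu_*$ is small.

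The main obstacle is uniformity in $\lambda$. Near $\lambda=-\beta_-$ the factor $\cS(\lambda_\mrD)+\lambda$ can be small. To handle this I will use that $\cS(\lambda_\mrD)\ge\cS(a_-)$ is pinned near $\cS(0)$ while the positive remainder carries factors $(\cS(\rho)+\lambda)^2$ that are of comparable size. I first try bounding the ratio of positive to negative parts, using $\cS(\rho)+\lambda\le \beta_+-\beta_-+(\cS(\lambda_\mrD)+\lambda)$, or otherwise restrict to $\lambda\ge-\beta_-+\delta$ and argue the endpoint by continuity. For large $|\lambda|$, I rescale by $\lambda^2$. Lemma\,\ref{l:Ptspec_constraint} gives $S_\lambda\to\{\mrD^{-1}\phi'\}^\bot$, and $\langle\phi',\mrD^{-1}\phi'\rangle<0$ for small $\mu<0$ by \eqref{e:Dinv}. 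This gives uniform negativity of $\mrA/\lambda^2$.

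Once $n(\mrD^{-1}|_{S_\lambda})=0$ is established, $\langle\mrD^{-1}P_1,P_1\rangle\ge0$ for $P_1\in S_\lambda$ follows. For \eqref{e:b/a}, I repeat the monotonicity estimate \eqref{e:DinvC_BLF}, writing $\mrD^{-1}\cS_\mrD=\cS(\lambda_\mrD)\mrD^{-1}+\mrD^{-1}(\cS_\mrD-\cS(\lambda_\mrD))$. The second operator is nonnegative because $(\cS(\rho)-\cS(\lambda_\mrD))/\rho\ge0$: this holds for $\rho>\omega$ by monotonicity, and the term vanishes at $\rho=\lambda_\mrD$. Hence
\[
\langle\mrD^{-1}\cS_\mrD P_1,P_1\rangle\ge\cS(\lambda_\mrD)\langle\mrD^{-1}P_1,P_1\rangle\ge\beta_-\langle\mrD^{-1}P_1,P_1\rangle .
\]
Dividing by $\langle\mrD^{-1}P_1,P_1\rangle$ whenever it is positive yields \eqref{e:b/a}.
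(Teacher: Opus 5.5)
Your strategy is the paper's. You do a rank-one index count for $\mrD^{-1}$ on $S_\lambda$ through the scalar $\mrA=\langle\mrD s_\lambda,s_\lambda\rangle$, split it into the $\psi$-term and a remainder over $\rho>\omega$, and then use the monotonicity estimate \eqref{e:DinvC_BLF} to get \eqref{e:b/a}. Skipping the paper's shift to $\mrD^{-1}+\delta$ is harmless. If some $v\in S_\lambda$ had $\langle\mrD^{-1}v,v\rangle<0$, then $w=\mrD s_\lambda$ satisfies $\langle\mrD^{-1}v,w\rangle=\langle v,s_\lambda\rangle=0$ and $\langle\mrD^{-1}w,w\rangle=\mrA<0$. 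Then $\mathrm{span}\{v,w\}$ would be a two-dimensional negative subspace, contradicting $n(\mrD^{-1})=1$.

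The gap is at the step you flagged yourself: uniformity in $\lambda$ near $-\beta_-$. Neither of your proposed fixes closes it. Set $x=\cS(\lambda_\mrD)+\lambda$, so the $\psi$-term is $-x^2\langle\phi',\psi\rangle^2/|\lambda_\mrD|$. Your bound $\cS(\rho)+\lambda\le\beta_+-\beta_-+x$ then gives a ratio of remainder to the modulus of the $\psi$-term of at most
\[
\frac{|\lambda_\mrD|\,\Vert\phi'\Vert^2}{\omega\,\langle\phi',\psi\rangle^2}\Bigl(1+\frac{\beta_+-\beta_-}{x}\Bigr)^2 .
\]
This is uniformly small only if $x$ is bounded below. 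But $\inf_{\lambda>-\beta_-}x=\cS(\lambda_\mrD)-\beta_-$, and $\cS(\lambda_\mrD)\ge\cS(a_-)$ only gives that this is $\ge0$. The remainder factors are \emph{not} comparable to $x$ as $x\to0$.

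Continuity cannot rescue the endpoint either. Suppose $\cS(\lambda_\mrD)=\beta_-$ while $\cS(\rho)>\beta_-$ on part of the spectral support of $\psi^\bot$, which is possible for a non-analytic $\cS$. Then for every $\mu<0$ the $\psi$-term vanishes as $\lambda\downarrow-\beta_-$ while the remainder stays positive. So $\mrA>0$ and $S_\lambda$ contains a negative direction of $\mrD^{-1}$, and no $\lambda$-uniform $\mu_*$ exists.

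The missing ingredient is a uniform margin $\cS(\lambda_\mrD)-\beta_-\ge m_0>0$, and the analyticity hypothesis supplies it. A real-analytic nondecreasing $\cS$ is either constant or strictly increasing.
\begin{itemize}
\item If $\cS\equiv c$, then $\mrA=(c+\lambda)^2\langle\phi',\mrD^{-1}\phi'\rangle<0$ directly for all $\lambda>-\beta_-$.
\item If $\cS$ is strictly increasing, then $a_-\le\lambda_\mrD<0$ and $\lambda_\mrD\to0$ give $\cS(\lambda_\mrD)\ge\cS(a_-/2)>\cS(a_-)\ge\beta_-$ once $\mu_*$ is small.
\end{itemize}
With $x\ge m_0$ the displayed bound is uniform over all $\lambda>-\beta_-$, and your separate large-$\lambda$ rescaling becomes unnecessary. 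Alternatively, note that Lemma\,\ref{l:DDC-lower} and Theorem\,\ref{t:1D_coercive} only use $\lambda>-\beta_-/2$, where $x\ge\beta_-/2$ trivially.

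The paper's own proof argues only at fixed $\lambda$ and as $\lambda\to\infty$, so it leaves the same point open. With the margin argument, your version would be the more complete one.
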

  \begin{proof}
   If $\lambda_\mrD>0$ then $\mrD^{-1}>0$ and the statement is trivial. Assume, $\lambda_\mrD<0$ but with $|\lambda_\mrD|$ sufficiently small, controlled by the size of $\mu_*.$  
   The spectrum of $\mrD^{-1}$ accumulates at the origin from the right. 
   To apply Proposition 5.3.1 of \cite{bib:KP13}, see also \cite{bib:KP12}, we shift the inverse operator slightly to open a spectral gap around zero. For $\lambda\in\mbbR$ the space $S_\lambda$ is real. Fix $\mu\neq 0$, the operator  $\mrD^{-1}+\delta$ has well-defined negative and zero indices, satisfying $n(\mrD^{-1}+\delta)=n(\mrD)=1$ for $\delta$ sufficiently small. When constrained to act on $S_\lambda$ we deduce that
\be \label{e:Dinv-index} n\left((\mrD^{-1}+\delta)\bigl|_{S(\lambda)}\right)=n(\mrD^{-1}+\delta)-n(\mrA),\ee
where 
\beq\label{e:Adef}
\begin{aligned}
\mrA&:=\langle (\mrD^{-1}+\delta)^{-1} S_\lambda,S_\lambda\rangle,\\
&=\langle \frac{\mrD}{1+\delta \mrD}\mrD^{-1}(\cS_\mrD+\lambda)\phi',\mrD^{-1}(\cS_\mrD+\lambda)\phi'\rangle,\\
&= \langle (1+\delta \mrD)^{-1}(\cS_\mrD+\lambda)\phi',\mrD^{-1}(\cS_\mrD+\lambda)\phi'\rangle.
\end{aligned}
\eeq
Recalling that $\psi$ is the ground state of $\mrD$ with eigenvalue $\lambda_\mrD(\mu)$, we write 
\be 
\label{e:phi-bot}
\phi'=\frac{\langle \phi',\psi\rangle}{\lVert\phi'\rVert\lVert\psi\rVert} \psi +\psi^\bot,
\ee
where $\psi^\bot\in \{\psi\}^\bot$ satisfies $\lVert\psi^\bot\rVert\leq \lVert\phi'\lVert.$ 
Since $\phi'$ and $\psi$ are non-negative ground states we have $$\langle\phi',\psi\rangle\neq0.$$
Since all operators in the inner-product defining $\mrA$ are functions of $\mrD$ they preserve the orthogonality of $\psi$ and $\psi^\bot.$ In particular, we have
\beq\label{e:A2}
\begin{aligned}
\mrA&= \frac{(\cS(\lambda_\mrD)+\lambda)^2}{\lambda_\mrD(1+\delta \lambda_\mrD)}\frac{|\langle\phi',\psi\rangle|^2}{\Vert \psi\Vert^2 \Vert\phi'\Vert^2}+\langle (1+\delta \mrD)^{-1}(\cS_\mrD+\lambda)\psi^\bot,\mrD^{-1}(\cS_\mrD+\lambda)\psi^\bot\rangle.
\end{aligned}
\eeq
Crucially $\psi^\bot$ has zero projection onto $\psi$ and the second term in \eqref{e:A2} is bounded for fixed $\lambda$ uniformly for all $|\mu|<\mu_*.$ For $\lambda>-\beta_+$ and $\delta$ sufficiently small we have $(\cS(\lambda_\mrD)+\lambda)^2/(1+\delta\lambda_\mrD)>0$ and the first term in \eqref{e:A2} scales with $|\lambda_\mrD^{-1}|\gg1$. In particular $A$ has the same sign as $\lambda_\mrD$ for $|\mu|\leq \mu_*$ with $\mu_*$ sufficiently small. For large values of $\lambda>0$ both terms in \eqref{e:A2} grow quadratically in $\lambda$, but the first term has the dominant coefficient of $\lambda^2$. By assumption $\lambda_\mrD<0$, hence we deduce that $n(A)=1$ and from \eqref{e:Dinv-index} that $n((\mrD^{-1}+\delta)\bigl|_{S_\lambda})=0.$
This implies that $\mrD^{-1}\cap[\infty,-\delta),$ is empty for all $\delta$ sufficiently small, equivalently $n(\mrD^{-1})=0$ and $\langle \mrD^{-1}P_1,P_1\rangle\geq 0$ for all $P_1\in S_\lambda.$ Since the $\mrD^{-1}$ bilinear form is positive, the estimate \eqref{e:b/a} follows from the inequality \eqref{e:DinvC_BLF} in Lemma\,\ref{l:complex-bound}. 
  \end{proof}

We use the linearity of the constraint $P_1\in S_\lambda$ to bound the ratio of bilinear forms from below.
 \begin{lemma}
\label{l:DDC-lower}
The control $\mu_*$ can be chosen small enough that
\beq\label{e:DDC-lower} 
\frac{\langle \mrD^{-1} P_1, P_1\rangle}{ \langle\mrD^{-1}\cS_\mrD P_1,P_1 \rangle} \geq \frac{1}{2\beta_+}
\eeq
for all $P_1\in S_\lambda$, all $\lambda>-\frac{\beta_-}{2},$ and all $|\mu|\leq \mu_*.$
\end{lemma}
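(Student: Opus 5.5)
We work in the spectral representation of $\mrD$ and split $P_1\in S_\lambda$ according to the ground state $\psi$ and its orthogonal complement. Write
\[P_1 = c\,\psi + P_1^\bot,\qquad c=\hat P_1(\lambda_\mrD),\qquad P_1^\bot\perp\psi .\]
Then the two bilinear forms decompose as
\[\langle \mrD^{-1}P_1,P_1\rangle = \frac{|c|^2}{\lambda_\mrD} + \int_{\rho>\omega}\frac{|\hat P_1(\rho)|^2}{\rho}\,\mrd\rho,\qquad
\langle \mrD^{-1}\cS_\mrD P_1,P_1\rangle = \frac{\cS(\lambda_\mrD)|c|^2}{\lambda_\mrD} + \int_{\rho>\omega}\frac{\cS(\rho)|\hat P_1(\rho)|^2}{\rho}\,\mrd\rho .\]

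When $\lambda_\mrD>0$ every weight $1/\rho$ is positive. The bound $\cS\leq\beta_+$ then gives $\langle \mrD^{-1}\cS_\mrD P_1,P_1\rangle\leq \beta_+\langle\mrD^{-1}P_1,P_1\rangle$ directly, so the ratio is at least $1/\beta_+\geq 1/(2\beta_+)$. The substantive case is $\lambda_\mrD<0$ with $|\lambda_\mrD|\ll1$. There the $\psi$-component enters both forms with a large negative weight, and the constraint must be used to keep it under control.

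The key step is to use $P_1\perp s_\lambda$ to bound $|c|$ by the high-mode part. Decompose $\phi'$ as in \eqref{e:phi-bot}. Since $s_\lambda$ is a function of $\mrD$ applied to $\phi'$, the constraint reads
\[0=\langle P_1,s_\lambda\rangle = c\,\frac{\cS(\lambda_\mrD)+\lambda}{\lambda_\mrD}\,\theta + \int_{\rho>\omega}\frac{\cS(\rho)+\lambda}{\rho}\,\hat P_1(\rho)\,\overline{\hat\phi'(\rho)}\,\mrd\rho ,\]
where $\theta$ denotes the $\psi$-coefficient of $\phi'$, which is nonzero. For $\lambda>-\beta_-/2$ we have $\cS(\lambda_\mrD)+\lambda\geq\beta_-/2>0$.

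The difficulty is that the weight $(\cS(\rho)+\lambda)/\rho$ grows with $\lambda$, so the bound on $|c|$ is not uniform in $\lambda$. To handle this, divide the constraint by $\cS(\lambda_\mrD)+\lambda$ and use
\[\frac{\cS(\rho)+\lambda}{\cS(\lambda_\mrD)+\lambda}\leq \frac{\beta_+ + \lambda}{\beta_-/2+\lambda},\]
which is bounded uniformly for $\lambda>-\beta_-/2$ by a constant depending only on $\beta_\pm$. Cauchy--Schwarz in the measure $\mrd\rho/\rho$ then gives
\[|c|^2\leq K\,|\lambda_\mrD|^2\,Q,\qquad Q:=\int_{\rho>\omega}\frac{|\hat P_1(\rho)|^2}{\rho}\,\mrd\rho ,\]
with $K$ depending on $\beta_\pm$, $\theta$ and $\|\phi'\|$ but not on $\mu$ or $\lambda$. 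Consequently the ground-state contributions $|c|^2/|\lambda_\mrD|$ and $\cS(\lambda_\mrD)|c|^2/|\lambda_\mrD|$ are both at most $K'|\lambda_\mrD|\,Q$.

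Now insert this into the two forms. For the numerator,
\[\langle\mrD^{-1}P_1,P_1\rangle\geq (1-K|\lambda_\mrD|)\,Q .\]
For the denominator, the ground-state term is negative when $\lambda_\mrD<0$, so dropping it gives
\[\langle\mrD^{-1}\cS_\mrD P_1,P_1\rangle\leq \beta_+ Q .\]
Note that the denominator is positive: by Lemma~\ref{l:n(Dinv)} it is at least $\beta_-\langle\mrD^{-1}P_1,P_1\rangle\geq0$. Choosing $\mu_*$ small enough that $K|\lambda_\mrD|\leq 1/2$ yields
\[\frac{\langle\mrD^{-1}P_1,P_1\rangle}{\langle\mrD^{-1}\cS_\mrD P_1,P_1\rangle}\geq \frac{1-K|\lambda_\mrD|}{\beta_+}\geq\frac{1}{2\beta_+},\]
which is \eqref{e:DDC-lower}.

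The main obstacle is obtaining the constraint estimate on $|c|$ uniformly in $\lambda$ over the whole range $\lambda>-\beta_-/2$. This is the reason for normalizing the constraint by $\cS(\lambda_\mrD)+\lambda$, in the same spirit as the $\lambda\to\infty$ limit in Lemma~\ref{l:Ptspec_constraint}.
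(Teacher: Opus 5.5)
Your proposal follows the paper's proof step for step:
\begin{itemize}
\item drop the negative ground-state term to get $\langle\mrD^{-1}\cS_\mrD P_1,P_1\rangle\le\beta_+\Vert P_1\Vert_\omega^2$;
\item use $P_1\perp s_\lambda$ and Cauchy--Schwarz to show that the $\psi$-coefficient is $O(|\lambda_\mrD|)\Vert P_1\Vert_\omega$;
\item conclude $\langle\mrD^{-1}P_1,P_1\rangle\ge(1-C|\lambda_\mrD|)\Vert P_1\Vert_\omega^2$ and shrink $\mu_*$.
\end{itemize}

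There is one slip, at exactly the step you identify as the main obstacle. The bound $(\beta_++\lambda)/(\beta_-/2+\lambda)$ is \emph{not} uniformly bounded on $\lambda>-\beta_-/2$: it blows up as $\lambda\downarrow-\beta_-/2$. Keep $\cS(\lambda_\mrD)\ge\beta_-$ in the denominator instead. This gives $(\beta_++\lambda)/(\beta_-+\lambda)$, which is decreasing in $\lambda$ and hence bounded by $2\beta_+/\beta_--1$ on that range; this is the bound the paper uses.

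Two smaller points about your constant $K$. It depends on $\omega$, through $\Vert\phi'\Vert_\omega^2\le\Vert\phi'\Vert^2/\omega$. It also depends on $\theta=\langle\phi',\psi\rangle$, which varies with $\mu$, so you need $\theta$ bounded away from zero uniformly for $|\mu|\le\mu_*$. That follows from continuity of $\psi$ in $\mu$.
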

\begin{proof}
From Lemma\,\ref{l:n(Dinv)} we have $\mrD\bigl|_{S_\lambda}>0$ for all $\lambda>-\frac{\beta_-}{2}.$ Assume $\lambda_\mrD<0$ and return to the integral representation for the bilinear form in the first line of \eqref{e:DinvC_BLF}. Since the $\lambda_\mrD$ term is negative it may be dropped to yield an upper bound. Using the constraints \eqref{e:cC-bds} on the map $\cS$yields the upper bound
\beq\label{e:LTUB}\begin{aligned}
\langle \mrD^{-1}\cS_\mrD P_1,P_1\rangle&\leq
\int_{\rho>\omega}
\frac{\cS(\rho)}{\rho}|\hat P_1(\rho|^2\,\mrd \rho,\\
&\leq \beta_+ \int_{\rho>\omega}\frac{1}{\rho}|\hat P_1(\rho)|^2\mrd\rho\leq \beta_+\Vert P_1\Vert_{\omega}^2,
\end{aligned}
\eeq
where the $\omega$ semi-norm is defined as
\beq\label{e:omega-norm} \Vert P\Vert_\omega^2:=\int_{\rho\geq\omega}\frac{1}{\rho}|\hat P(\rho)|^2\,\mrd \rho=\Vert \pi_{\lambda_{\mrD}} \mrD^{-1} P_1\Vert^2.
\eeq
Here $\pi_{\lambda_\mrD}$ is the orthogonal projection off of $\psi$, the $\lambda_\mrD$ eigenvector of $\mrD$. To obtain a lower bound on the upper term in  \eqref{e:DDC-lower} we use $P_1\in S_\lambda$ to estimate
\[ 0 =\langle P_1,s_\lambda\rangle=\frac{\cS(\lambda_\mrD)+\lambda}{\lambda_\mrD} \hat P_1(\lambda_\mrD)\hat{\phi'}(\lambda_\mrD)+
\int_{\rho\geq\omega}\frac{\cS_\mrD(\rho)+\lambda}{\rho} \hat P_1(\rho)\hat{\phi'}(\rho)\,\mrd\rho.\]
Here $\hat{\phi'}(\lambda_\mrD)=\langle \phi',\psi\rangle > 0$ as both are non-negative ground states of self-adjoint operators. This affords the bound
\[\begin{aligned}
    |\hat P_1(\lambda_\mrD)| &\leq \frac{|\lambda_\mrD|}{(\cS(\lambda_\mrD)+\lambda)\langle \phi',\psi\rangle}\int_{\rho\geq\omega}\frac{\cS(\rho)+\lambda}{\rho} \hat P_1(\rho)\hat{\phi'}(\rho)\,\mrd \rho.
\end{aligned}\]
We have $\cS(\lambda)+\lambda>\beta_-+\lambda>\frac{\beta_-}{2}$.  Applying Young's inequality to the integral yields the estimate
\[ |\hat P_1(\lambda_\mrD)| \leq  \frac{|\lambda_\mrD|}{\langle \phi',\psi\rangle \omega} \frac{\beta_++\lambda}{\beta_-+\lambda}\Vert P_1\Vert_{\omega} \Vert \phi'\Vert_{\omega}\]
This bound on the projection of $P_1$ yields the estimate 
\[\begin{aligned}
    \langle \mrD^{-1} P_1,P_1\rangle &= \frac{|\hat P_1(\lambda_\mrD)|^2}{\lambda_\mrD} + \int_{\rho>\omega}\frac{1}{\rho}|\hat P_1(\rho)|^2\,\mrd \rho
    &\geq (C\lambda_D+1)\Vert P_1\Vert_\omega^2,
\end{aligned}\]
where $C$ maybe chosen independent of $|\mu|\leq \mu_*$ and $\lambda>-\beta_-/2.$
Taking $\mu_*$ small enough that $\lambda_\mrD(\mu)>-\frac{1}{2C}$ allows a lower bound on the upper term in \eqref{e:DDC-lower}, while \eqref{e:LTUB} yields an upper bound on the lower term. Dividing the two estimates cancels the factor of $\Vert P_1\Vert_\omega^2,$ and yields \eqref{e:DDC-lower}.
\end{proof}

\begin{thm}
\label{t:1D_coercive}
    Let $\mrD$ and $\mrC$ satisfy \eqref{e:CDspec} and $\cS$ satisfy \eqref{e:cC-bds}. Then there exists $\mu_*,\lambda_M>0$ such that for all $|\mu|\leq \mu_*$ the spectrum of $\mrL$ satisfies 
    \be \label{e:mrL-spec}
    \sigma(\mrL)\subset\{0\}\cup \{\Re\lambda<-\lambda_M\}.
    \ee
Moreover the kernel of $\mrL$ is simple.
\end{thm}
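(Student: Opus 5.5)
The proof assembles the preceding lemmas. The spectrum splits into three pieces: essential spectrum, non-real point spectrum, and real point spectrum. Lemma~\ref{l:Kernel_L} already gives simplicity of the kernel for $|\mu|\le\mu_*$. Lemma~\ref{l:Weyl} places $\sigma_{\rm ess}(\mrL)$ in $\{\Re\lambda\le-\lambda_{\rm M,ess}\}$. Lemma~\ref{l:complex-bound} places the genuinely complex point spectrum in $\{\Re\lambda<-\beta_-/2\}$. So the only remaining task is to control real nonzero eigenvalues $\lambda\in\sigma_p(\mrL)\cap\mbbR$. The target is $\lambda_M:=\tfrac12\min\{\lambda_{\rm M,ess},\,\beta_-/2,\,c\}$, where $c>0$ is the bound obtained for real eigenvalues below.

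For a real eigenvalue $\lambda\neq0$ I would argue by contradiction and suppose $\lambda>-\beta_-/2$. By Lemma~\ref{l:Ptspec_constraint} the first component satisfies $P_1\in S_\lambda$. We also need $P_1\neq0$: if $P_1=0$ then $\mrD P_2=\lambda P_1=0$, and since $\mrD$ is invertible for $\mu\neq0$ this forces $P_2=0$. The case $\mu=0$, where $\lambda_\mrD=0$, is handled separately using $\Psi_0^\dag=(\psi,0)^T$ and continuity.

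Lemma~\ref{l:n(Dinv)} then gives $a:=\langle\mrD^{-1}P_1,P_1\rangle\ge0$ and $b:=\langle\mrD^{-1}\cS_\mrD P_1,P_1\rangle\ge\beta_- a$, while Lemma~\ref{l:DDC-lower} gives $a\ge b/(2\beta_+)$. In particular $a>0$ once we know $b>0$; the lower bound in Lemma~\ref{l:DDC-lower}, $a\ge(1+C\lambda_\mrD)\Vert P_1\Vert_\omega^2$, makes this positive unless $P_1$ lies entirely in the $\psi$-direction. That case is excluded by the constraint $P_1\in S_\lambda$, since $\langle\psi,s_\lambda\rangle\neq0$.

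With $a,b>0$ and $c_0:=\langle\mrC P_1,P_1\rangle\ge0$, the quadratic \eqref{e:S_EVP} has real roots
\[
\lambda=-\frac{b}{2a}\Bigl(1\pm\sqrt{1-4ac_0/b^2}\Bigr),
\]
both of which are $\le0$. The "$+$" root satisfies $\lambda\le -b/(2a)\le-\beta_-/2$, which contradicts the assumption $\lambda>-\beta_-/2$.

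The "$-$" root, $\lambda=-2c_0/\bigl(b(1+\sqrt{1-4ac_0/b^2})\bigr)$, is bounded by $\lambda\le -c_0/b$. So everything reduces to a uniform lower bound $c_0/b\ge c>0$. Using \eqref{e:LTUB}, $b\le\beta_+\Vert P_1\Vert_\omega^2\le\beta_+\omega^{-1}\Vert P_1\Vert^2$, so it suffices to show $\langle\mrC P_1,P_1\rangle\ge c'\Vert P_1\Vert^2$ on $S_\lambda$. The spectral gap \eqref{e:CDspec} gives $\mrC\ge\omega$ on $\{\phi'\}^\bot$. Thus we need the constraint $s_\lambda$ to be uniformly non-orthogonal to $\phi'$, that is, $\langle s_\lambda,\phi'\rangle$ bounded away from zero relative to the norms of the two functions. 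Then a standard angle argument yields $c'\gtrsim\omega\sin^2\theta$, where $\theta$ is the angle between $\phi'$ and $S_\lambda$.

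I expect this last step to be the main obstacle. As $\mu\to0$, $s_\lambda$ is dominated by $\frac{\cS(\lambda_\mrD)+\lambda}{\lambda_\mrD}\langle\phi',\psi\rangle\psi$, so $S_\lambda\approx\{\psi\}^\bot$. The quantity $\langle\psi,\phi'\rangle>0$ gives the needed non-orthogonality, uniformly for $|\mu|\le\mu_*$ and for $\lambda$ in the bounded range $(-\beta_-/2,\lambda_{\max}]$. Here $\lambda_{\max}$ comes from the crude a priori bound $|\lambda|\lesssim\Vert\mrL\Vert$ in suitable norms; for large $\lambda$ one uses the limit $S_\lambda\to\{\mrD^{-1}\phi'\}^\bot$ of Lemma~\ref{l:Ptspec_constraint}, which is again $\approx\{\psi\}^\bot$. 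Once this bound is in hand, every nonzero real eigenvalue satisfies $\lambda\le-\min\{\beta_-/2,c\}$, and combining with the essential and complex bounds yields \eqref{e:mrL-spec}.
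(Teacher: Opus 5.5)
Your proposal is correct and follows the paper's proof. You assemble Lemmas~\ref{l:Kernel_L}, \ref{l:Weyl} and \ref{l:complex-bound}, and for real eigenvalues above $-\beta_-/2$ you combine three ingredients in the quadratic formula \eqref{e:Holyland}: Lemma~\ref{l:n(Dinv)}, the bound \eqref{e:LTUB} on $\langle \mrD^{-1}\cS_\mrD P_1,P_1\rangle$, and a uniform coercivity of $\mrC$ on $S_\lambda$ that comes from the non-orthogonality of $\phi'$ and $s_\lambda$ together with the $\lambda\to\infty$ limit of $S_\lambda$.

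The differences are minor. You bound the smaller root directly by $-\langle\mrC P_1,P_1\rangle/\langle \mrD^{-1}\cS_\mrD P_1,P_1\rangle$, so you need Lemma~\ref{l:DDC-lower} only to get $\langle\mrD^{-1}P_1,P_1\rangle>0$, not to bound the discriminant; you also make the coercivity constant explicit through the angle between $\phi'$ and $s_\lambda$. Your appeal to an a priori bound $|\lambda|\lesssim\Vert\mrL\Vert$ is not available, since $\mrL$ is unbounded, but it is also unnecessary: with positive quadratic and linear coefficients both real roots are nonpositive, so only $\lambda\in(-\beta_-/2,0]$ needs to be covered.
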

\begin{proof}
By Lemma\,\ref{l:Kernel_L}, the kernel of $\mrL$ is simple. By Lemmas\,\ref{l:Weyl} and \ref{l:complex-bound}  the essential spectrum and all genuinely complex eigenvalues of $\mrL$ are uniformly in the left-half complex plane.
We turn to the set $\sigma_p(\mrL)\cap[-\beta_-/2,\infty).$  Lemma\,\ref{l:n(Dinv)} 
implies that 
$$\frac{\langle \cS_\mrD \mrD^{-1} P_1, P_1\rangle}{\langle \mrD^{-1} P_1, P_1\rangle} \geq \beta_-.$$ 
to the  quadratic formula, \eqref{e:Holyland},   Taking $\mu_*$ small enough  Lemma\,\ref{l:DDC-lower} implies that 
$$\frac{\langle \mrD^{-1} P_1, P_1\rangle} {\langle \cS_\mrD \mrD^{-1} P_1, P_1\rangle}>\frac{1}{2\beta_+}.$$  
The operator $\mrC$ is positive modulo a simple kernel spanned by $\phi'$. The projection 
\[\begin{aligned}
    \langle \phi',s_\lambda\rangle &=\langle \phi',\mrD^{-1}(\cS_\mrD+\lambda)\phi'\rangle, \\
    &= \frac{\mrC(\lambda_\mrD)+\lambda}{\lambda_D}\frac{\langle \phi',\psi\rangle^2}{\Vert\phi'\Vert^2_{L^2}} +O(1), 
\end{aligned}
\]
is non-zero for all $\lambda>-\beta_-/2.$ 
By classical results there exists $c_+(\lambda,\mu)>0$ such that
\[\langle \mrC P_1, P_1\rangle>c_+\Vert P_1\Vert^2_{L^2},\]
for all $P_1\in S_\lambda.$ Moreover $c_+$ is continuous in $\lambda>-\beta_-/2$, and the space $S_\lambda$ converges to a limit as $\lambda\to \infty.$  We deduce that $c_+$ has a positive lower bound $\underline c_+$ on the set $(\lambda,\mu)\in[-\frac{\beta_-}{2},\infty)\times [-\mu_*,\mu_*].$
From \eqref{e:LTUB} we obtain the bound
\[\langle \mrD^{-1}\cS_\mrD P_1,P_1\rangle \leq \frac{\beta_+}{\omega} \Vert P_1\Vert^2_{L^2},\]
for all $\lambda>-\frac{\beta_-}{2}$ and all $|\mu|<\mu_*.$ Combining these estimates with \eqref{l:DDC-lower} affords the bound
\beq\label{e:Triple_bdd} \frac{\langle \mrD^{-1} P_1,P_1\rangle \langle \mrC P_1,P_1\rangle}{\langle \mrD^{-1}\cS_\mrD P_1,P_1\rangle^2} \geq \frac{\omega\underbar c_+}{2\beta_+^2}.
\eeq
From the quadratic formula \eqref{e:Holyland} we deduce that all $\lambda\in\sigma_p(\mrL)\cap [-\frac{\beta_-}{2},\infty)$ satisfy the bound
\beq\label{e:Re-point}
\lambda \leq - \frac{\beta_-}{2}\left(1- \Re\sqrt{1-2 \frac{\omega\underbar c_+}{\beta_+^2}}\right).\eeq
 Taking $\lambda_M$ to be the largest of this quantity, the essential spectrum bound, and the genuinely complex point spectrum bound yields \eqref{e:mrL-spec}. 
\end{proof}

\section{Curvature Driven Flow}
To derive the motion of front solutions of the Modal PNLS  we consider a smooth, closed interface 
$\Gamma=\{\gamma(s)\,\big| s\in[0,L]\}$ and introduce the local Frenet coordinates
\be\label{e:Frenet}
x= \gamma(s) + n(s) z/\eps,
\ee
where $n(s)$ is the unit outward normal to the curve $\Gamma$ at point $\gamma(s)$ and
 $z$ is signed, $\eps$-scaled distance to $\Gamma. $ 
The interface $\Gamma$ is far from self-intersection then the change of variables from $x=(x_1,x_2)$ to $(s,z)$ is well defined on a neighborhood of $\Gamma$. Indeed there exists $\ell>0$ such that the neighborhood contains all points $x\in\Omega$ whose scaled distance to $z(x)$ to $\Gamma$ satisfies $z(x)<\ell/\eps.$  
The curve $\Gamma$ breaks the domain $\Omega$ into inner and outer regions
\beq\label{e:Omega_pm}\begin{aligned}
    \Omega_{\rm inner}&:=\{x \,\bigl|\, |z(x)|\leq\ell/\eps\},\\
\Omega_{\rm outer}&:=\{x \,\bigl|\, |z(x)|>\ell/\eps\}.
\end{aligned}
\eeq
Since the front function decays exponentially to a constant, for the purpose of matched asymptotics with $\eps\ll 1$ the normal rays 
$\{x(s,z)\,\bigl|\, |z|\leq \ell/\eps\}$ eminating from $\gamma(s)$ can be taken to be infinitely long. The error in this approximation reduces to exponentially small terms that do not have impact.  This allows the definition of the quasi-steady front function $\Phi=\Phi(x;\gamma)$ defined on $\Omega$,
\be 
\label{e:Ansatz}
\Phi(x):=\begin{pmatrix} \phi(z(x)) \cr 0 \end{pmatrix}, 
\ee
with the understanding that $\Phi(x)$ transitions to the far-field values $(\pm 1,0)^t$  in the outer domain $\Omega_{\rm outer}$
The evolution of $\Phi$ is tracked via the  interface map $\gamma=\gamma(s,t)$ whose motion is prescribed by the normal velocity. 

We proceed by multiscale expansion, using Cartesian variables in the outer domain and Frenet variables in the inner domain. The outer expansion is trivial and the matching amounts constant far-field values as outlined above. The details are equivalent to those in \cite{bib:PR24} and omitted, with attention focused on the inner expansion.
In the Frenet coordinates in $\Omega_{\rm inner}\subset\mbbR^2$ the scaled Laplacian takes the form
\beq\label{e:LapBelt}\eps^2\Delta=\partial_z^2+\eps \kappa_0(s)\partial_z +\eps^2 (z\kappa_1(s)\partial_z+\Delta_s)+\eps^3(\Delta_{s,1}+z^2\kappa_2(s)\partial_z) +O(\eps^4).\eeq
Here $\Delta_s$ is the Laplace-Beltrami operator on the interface $\Gamma$ and $\kappa_0=\kappa_0(s,t)$ is its curvature.  The higher order curvatures satisfy  $\kappa_i=(-1)^i\kappa_0^{i+1}$, 
 see \cite{bib:HP15}[eqn (6.37)] and \cite{bib:DP}[eqn (2.8)] for details of this derivation.

To expand the residual vector field near the quasi-equilibrium front profile  we write the vector field from \eqref{e:MFPNLS} in factored form $\mrF(U)=\mrM(U)U$, and conduct inner expansions
\be \label{e:Resid} 
\mrF(U) = \mrF_0+ \eps \mrF_1+ \eps^2 \mrF_2+\eps^3 \mrF_3 +O(\eps^4).\ee
This requires  expansions of $U$,
\beq U=U_0+\eps U_1 +\eps^2 U_2 +\eps^3 U_3+O(\eps^4)
\eeq
where each $U_i=(p_i,q_i)^t$ and $\mrM(U),$
$$\mrM(U)= \mrM_0+\eps \mrM_1 + \eps^2\mrM_2+\eps^3 \mrM_3 +O(\eps^4).$$
The quasi-steady assumption leads to
\beq \label{e:U0}
U_0=\Phi(z) =\begin{pmatrix}\phi(z)\cr 0\end{pmatrix}.
\eeq
The construction of the higher order terms will impose a factored structure
\beq U_i(z,s,t)= \chi_i(s,t) \bar U_i(z)= \chi_i(s,t)\begin{pmatrix}
    \bar p_i(z) \cr \bar q_i(z) \end{pmatrix}.
\eeq
We will use the overbar to denote functions whose inner expansion depend upon $z$ alone.

\subsection{Sub-Operator Expansions}

Each term in the expansion of the matrix factor $\mrM$ of $\mrF$ has sub-operators,
\beq \mrM_i=\begin{pmatrix} 0 & \cN_{-,i} \cr
-\cN_{+,i} & -\cM_i \end{pmatrix}.
\eeq
we expand each sub-operator in turn. The operators $\cN_\pm$ defined in \eqref{e:cN_expression} have expansions formed from concatenations of the Frenet variable expansion of $\eps^2\Delta$ with the regular expansion of $g_\pm(|U|^2).$
For a general smooth function $g:\mbbR\mapsto\mbbR$, assuming the leading order term \eqref{e:U0} we have
\beq
\label{e:gexpansion}
\begin{aligned}
    g(|U|^2)&= g(\phi^2)+ \eps g'(\phi^2)(2\phi p_1)+ 
    \eps^2\left( g'(\phi^2)(2\phi p_2+p_1^2+q_1^2)+2 g''(\phi^2)\phi^2p_1^2 \right)+\\
&\hspace{-0.1in}\eps^3\left(2g'(\phi^2)(\phi p_3+p_1p_2+q_1q_2)
    +2g''(\phi^2)\phi p_1(2\phi p_2+p_1^2+q_1^2)+\frac43 g'''(\phi^2)\phi^3p_1^3\right)+O(\eps^4).
\end{aligned}    
\eeq
The coefficient of $\eps^i$ in the expansion will be denoted by $G_i(z;U)$ for $i=1, 2, 3, \ldots.$
Combining \eqref{e:gexpansion} with the Laplace-Beltrami expansion \eqref{e:LapBelt} affords a systematic expansion
$$\cN_\pm(U)=\cN_{\pm,0}+\eps \cN_{\pm,1}+\eps^2\cN_{\pm,2}+\ldots,$$
into sum of operators in $z$ and $s$,
\be
\label{e:cN-exp}
\ba{rcl}
\cN_{\pm,0}&=&-\partial_z^2+g_\pm(\phi^2),\\ 
\cN_{\pm,1}&=&-\kappa_0(s) \partial_z +G_{\pm,1}(z;U),\\
\cN_{\pm,2}&=& -\kappa_1(s) z\partial_z-\Delta_s +G_{\pm,2}(z;U),\\
\cN_{\pm,3}&=&-\kappa_2(s)z^2\partial_z
-\Delta_{s,1}+ G_{\pm,3}(z;U),\ea\ee


To expand $\cM:=\cS(\cN_-)$ requires expressions for the resolvent $(\lambda-\cN_-)^{-1}$ that appears in the Cauchy functional integral.
That is we expand the solution of
$$ (\lambda-\cN_-)W = V,$$
where $V$ is supported in the inner domain and has smooth dependence on $s$. This allows the operators $\eps^2\Delta_s$ to be treated as perturbations in a Dirichet expansion,
\[\begin{aligned} (\lambda-\cN_-)^{-1} &= (\lambda-\mrD - \eps \cN_{-,1}-\eps^2 \cN_{-,2}+O(\eps^3))^{-1}, \\
&= 
\left(\mrI + \eps R_\lambda \cN_{-,1} +\eps^2( R_\lambda \cN_{-,2} +(R_\lambda \cN_{-,1})^2)+O(\eps^3)\right)R_\lambda,\\
&= R_\lambda +\eps R_\lambda \cN_{-,1}R_\lambda +\eps^2\left(R_\lambda \cN_{-,2}R_\lambda + R_\lambda \cN_{-,1}R_\lambda \cN_{-,1}R_\lambda\right)+
O(\eps^3),
\end{aligned}\]
where $R_\lambda:=(\lambda-\mrD)^{-1}$ is the resolvent of $\mrD.$
Using the Cauchy integral formulation the operator $\cM$ admits the expansion
\beq\begin{aligned} \cM_0&=\cS_\mrD,\\
\cM_1&= \frac{1}{2\pi i} \int_C \cS(\lambda) R_\lambda(\kappa_0\partial_z-G_{-,1}) R_\lambda\, \mrd \lambda,\\
\cM_2&:= \frac{1}{2\pi i} \int_C \cS(\lambda) 
R_\lambda\left( \cN_{-,2} +  \cN_{-,1}R_\lambda \cN_{-,1}\right) R_\lambda\, \mrd \lambda.
\end{aligned}
\eeq
These terms will be further refined as the structure of $U_1$ and $U_2$ is revealed. However a crucial feature is that the Laplace-Beltrami operator in $\cN_{-,2}$ makes no contribution to $\cM_2$. This arises from the residue theorem. Since $\Delta_s$ has no $z$ dependence the Laplace-Beltrami operator factors out of the contour integral. As $\cS(\lambda)$ has no zeros on the spectrum of $\mrD$ and is analytic on the region within the contour $C$, the residue associated to the factored contour integral is zero. This leads to the operator equality
\beq \label{e:cM2_expression}
\frac{1}{2\pi i} \int_C \cS(\lambda) 
R_\lambda \Delta_s R_\lambda\, \mrd \lambda=
\Delta_s \frac{1}{2\pi i} \int_C \cS(\lambda) 
R_\lambda^2\, \mrd \lambda =0.
\eeq

\subsection{Residual Expansion}
We collect orders in the residual, defined as the vector field $\mrF$ evaluated at the quasi-steady ansatz $U$. At the leading order we have
\beq \label{e:FExp0}
\mrF_0(U)= \begin{pmatrix} 0 & \cN_{-,0} \cr
-\cN_{+,0}& -\cM_0 \end{pmatrix} \begin{pmatrix} \phi \cr 0 \end{pmatrix} =0,
\eeq
 by virtue of the quasi-steady ansatz \eqref{e:U0} with $\phi$ solving the front system \eqref{e:phi-eq} which is equivalent to $\cN_{+,0}\phi=0.$
At the first order in $\eps$ the residual reduces to
\beq \label{e:F1}
\begin{aligned}
\mrF_1(U)&= 
\begin{pmatrix} 0 & \cN_{-,0} \cr
-\cN_{+,0}& -\cM_0 \end{pmatrix} \begin{pmatrix} p_1 \cr q_1 \end{pmatrix}+ 
\begin{pmatrix} 0 & \cN_{-,1} \cr
-\cN_{+,1}& -\cM_1 \end{pmatrix} \begin{pmatrix} \phi \cr0 \end{pmatrix},\\
&= 
\begin{pmatrix} 0 & \cN_{-,0} \cr
-\cN_{+,0}& -\cM_0 \end{pmatrix} \begin{pmatrix} p_1 \cr q_1 \end{pmatrix}+ 
 \begin{pmatrix} 0 \cr \kappa_0 \phi' -2g_+'(\phi^2)\phi p_1 \end{pmatrix},\\
 &= \mrL \begin{pmatrix} p_1 \cr q_1 \end{pmatrix} +\begin{pmatrix} 0 \cr \kappa_0 \phi' \end{pmatrix},
\end{aligned}
\eeq
where the linear operator $\mrL$ given in \eqref{e:mrL-def} arrives after collecting all terms in $U_1$. The invertability of $\mrL$ on inhomogeneities with a tensor product factorization is established in section 2. The structure of $\mrL^{-1}$ given in \eqref{e:Linv-gen} will propagate the tensor product structure to the correction terms.  At the second order in $\eps$ the residual becomes
\beq \label{e:F2}
\begin{aligned}
\mrF_2(U)&= 
\begin{pmatrix} 0 & \cN_{-,0} \cr
-\cN_{+,0}& -\cM_0 \end{pmatrix} \begin{pmatrix} p_2 \cr q_2 \end{pmatrix}+ 
\begin{pmatrix} 0 & \cN_{-,1} \cr
-\cN_{+,1}& -\cM_1 \end{pmatrix} \begin{pmatrix} p_1 \cr q_1 \end{pmatrix}+
\begin{pmatrix} 0 & \cN_{-,2} \cr
-\cN_{+,2}& -\cM_2 \end{pmatrix} \begin{pmatrix} \phi \cr0 \end{pmatrix}
\\
&= 
\begin{pmatrix} 0 & \cN_{-,0} \cr
-\cN_{+,0}& -\cM_0 \end{pmatrix} \begin{pmatrix} p_2 \cr q_2 \end{pmatrix}+ 
 \begin{pmatrix} -\kappa_0 \partial_z q_1+2g_-'(\phi^2)\phi p_1q_1 \cr 
 \kappa_0 \partial_zp_1 - 2g_+'(\phi^2)\phi p_1^2 -\cM_1 q_1 \end{pmatrix}+\\
 &\hspace{0.5in} \begin{pmatrix} 
 0 \cr\kappa_1 z\phi'- g_+'(\phi^2)(2\phi p_2+p_1^2+q_1^2)\phi -2g_+''(\phi^2) p_1^2\phi^3 \end{pmatrix},\\
 &= \mrL \begin{pmatrix} p_2 \cr q_2 \end{pmatrix} +\mrR_2(U_1), 
 \end{aligned}
 \eeq
 where we have introduced the second order inhomogeneity
 \beq\label{e:R2}
 \mrR_2(U_1):=\begin{pmatrix} 
 -\kappa_0 \partial_z q_1+2g_-'(\phi^2)\phi p_1q_1 \cr 
 \kappa_0 \partial_zp_1 - 2g_+'(\phi^2) \phi p_1^2 -\cM_1 q_1 +
 \kappa_1 z\phi'- g_+'(\phi^2)\phi |U_1|^2 -2g_+''(\phi^2) p_1^2\phi^3  \end{pmatrix}.
\eeq
The third-order expansion becomes cumbersome, we defer the calculations until the relevant sub-parts are identified and write the result symbolically as
\beq\label{e:F3}
F_3(U)= \mrL \begin{pmatrix} p_3 \cr q_3 \end{pmatrix} + \mrR_3,
\eeq
where the inhomogeneity takes the form
\beq\label{e:R3}
R_3(U_1,U_2) = 
\begin{pmatrix}  \cN_{-,1}q_2+\cN_{-,2}q_1 
\cr
-\cN_{+,1}p_2 -\cM_1q_2 
-\cN_{+,2}p_1 -\cM_2q_1
-\cN_{+,3}\phi \end{pmatrix}\Bigl|_{U_3=0}.
\eeq

\subsection{Normal Velocity and Matching}
The quasi-steady reduction of the system \eqref{e:MFPNLS}
in the Frenet variables involves an expansion of the normal variable $z=z(t)$ where the normal front velocity $V$ at a point $x$ in space is written through $z=z(x)$ as $\partial_t z=-\mrV.$ 
To extract the curvature dynamics we develop a quasi-steady manifold $U$ parameterized by the interface $\Gamma$ through the scaled distance function $z$ and the curvature $\kappa_0$. These quantities evolve on the slow time $T=\eps^2 t$ for which $\eps^2\partial_T=\partial_t$. 
The chain rule gives a material derivative 
\be 
\label{e:chain-rule}
D_T U=\frac{\partial U}{\partial z}\frac{\partial z}{\partial T}+\frac{\partial U}{\partial T}.
\ee
The normal velocity $V$ of the curve is scaled as   
$V:=-\eps^{-1} \frac{\partial z}{\partial T}.$ This affords the reduction
\be \label{e:T}
\partial_\tau \tU=\eps^2D_T U=-\eps V\frac{\partial U}{\partial z}+\eps^2\frac{\partial U}{\partial T}.\ee
The normal velocity admits a formal regular perturbation expansion
$$\mrV=\mrV_1+\eps \mrV_1+\eps^2\mrV_2+O(\eps^3),$$  
while the $T$ partial derivatives of $U$ satisfy
 $$\partial_T U= \eps \partial_TU_1+ \eps^2 \partial_T U_2+O(\eps^3).$$
 Combining these expansions yields the Frenet-variable inner expansion of the time derivative in \eqref{e:MFPNLS},
 \be \label{e:DT_exp}\begin{aligned}
  \partial_t U&=-\eps \mrV_0\partial_z U_0-\eps^2(\mrV_0\partial_z U_1+\mrV_1\partial_z U_0)- \\
  &\hspace{0.3in}\eps^3(\mrV_0\partial_z U_2+ \mrV_1\partial_z U_1 +\mrV_2\partial_z U_0s- \partial_T U_1)+O(\eps^4).
  \end{aligned}
\ee
The quasi-steady front normal velocity $\mrV$  is derived by balancing the temporal expansion \eqref{e:DT_exp} with the residual expansion terms from \eqref{e:F1}-\eqref{e:F3}. This is subject to the outer solution matching, which reduces to $U_i=0$ as $z\to\pm\infty$ for $i=1, 2, \ldots$

At $O(\eps)$, the balance in the evolution system  yields
\begin{equation}
 -\mrV_0 \begin{pmatrix}\phi'\cr 0\end{pmatrix}= \mrL U_1 + \mrR_1,
\eeq
which can be rearranged into the linear system for $U_1$
\begin{equation}
 \mrL U_1 =-\begin{pmatrix}\mrV_0 \\ \kappa_0 \end{pmatrix}\phi'.
\end{equation}
The inner-outer matching requires decay of the inner solution at infinity. Thus the  solvability is the usual Fredholm orthogonality to the adjoint kernel given by \eqref{e:Psi0},
\begin{equation}
\begin{pmatrix} \mrV_0 \\ \kappa_0 \end{pmatrix}\phi'\perp\Psi_0^\dag= \begin{pmatrix} \mrD^{-1}\cS_\mrD\phi' \\ \phi' \end{pmatrix}.
\label{e:solvability_V0}
\end{equation}
Both $\mrV_0$ and $\kappa$ are functions of the tangential variable $s$ that parameterizes location on the interface. They are constant in $z$, and the $L^2(\mbbR_z)$-orthogonality condition connects the leading order normal velocity $\mrV_0$ to the curvature
\begin{equation}
\mrV_0 =  -\alpha_1\kappa_0,
\label{e:V0_result}
\end{equation}
where the negative sign is chosen to align sign$(\alpha_1)$ with sign$(\mu)$ and to have positive $\alpha_1$ correspond to motion by curvature. This coupling constant has the definition
\begin{equation}
\label{e:alpha0}
\alpha_1(\mu):= \frac{\lVert\phi'\rVert^2}{\langle \mrD^{-1}\cS_\mrD\phi', \phi'\rangle}.
\end{equation}
The assumptions  the spectrum of $\mrD$, and the analysis of Lemma\,\ref{l:n(Dinv)}, establishes that 
\[\alpha_1(\mu)=\frac{\lambda_\mrD \Vert\phi'\Vert^2}{\cS(\lambda_\mrD)\langle \phi',\psi\rangle^2} +O(\mu^2),\]
for $|\mu|\ll1$. Since $\cS(\lambda_D)\geq \beta_->0$ this shows that sign$(\alpha_1)=\,$sign$(\lambda_d)=\,$sign$(\mu)$. In particular all three change sign at $\mu=0$. The former is motion by curvature and the latter is motion against curvature, which requires regularization to be locally well posed. To address this requires resolution of the higher order terms. 

The first step is to solve for $U_1$. Using the inverse formula \eqref{e:Linv-gen} we have
\beq\label{e:U1}
U_1 = 
\kappa_0\mrL^{-1} \begin{pmatrix} \alpha_1 \phi' \cr - \phi' \end{pmatrix}
=
\kappa_0\begin{pmatrix}
 \mrC^{-1} \left(\phi'-\alpha_1\mrD^{-1}\cS_\mrD \phi'\right) 
 \cr
\alpha_1 \mrD^{-1}\phi'
\end{pmatrix}=
\kappa_0 \begin{pmatrix}\alpha_1 \mrC^{-1} \Pi_{\phi'}^\bot \mrD^{-1}\cS_\mrD \phi' \cr \alpha_1\mrD^{-1}\phi' \end{pmatrix},
\eeq
where $\Pi_{\phi'}^\bot$ is the orthogonal projection onto 
$\{\phi'\}^\bot=\{\ker(\mrC)\}^\bot.$
This motives the tensor factorization $U_1(s,z)= \kappa_0(s) \bar U_1(z),$
where
\beq\label{e:bU1}
\bar U_1:= \begin{pmatrix} \bar p_1 \cr \bar q_1 \end{pmatrix} =  \begin{pmatrix} \alpha_1 \mrC^{-1} \Pi_{\phi'}^\bot \mrD^{-1}\cS_\mrD \phi' \cr \alpha_1 \mrD^{-1}\phi' \end{pmatrix}.
\eeq
Here and below we use an overbar to denote functions of $z$ alone, in particular the barred functions are independent of time. Parity allows simplification of the calculations. Since $\phi$ has odd parity, both $\phi'$ and $\phi^2$ have even parity and the operators $\mrC$, $\mrD$, and their functional relations preserve parity. We deduce that $\bar U_1=(\bar p_1, \bar q_1)^t$ has even parity about z=0.

At order of $\eps^2$  the evolution system yields the balance
\[ -\mrV_0 \partial_z U_1-\mrV_1\partial_z U_0 = \mrL \begin{pmatrix} p_2 \cr q_2 \end{pmatrix} +\mrR_2. \]
To break $\mrR_2$ into terms of even and odd parity we revisit $\cM_1$, which in light of \eqref{e:U1} reduces to
\beq\label{e:cM1-expression}\begin{aligned} \cM_1 &= \kappa_0 \frac{1}{2\pi i}\int_C \cS(\lambda)\left(R_\lambda \partial_z R_\lambda-2R_\lambda g_-'(\phi^2)\phi \bar p_1 R_\lambda\right)\,\mrd \lambda, \\
&= \kappa_0 \bar\cM_1.
\end{aligned}\eeq
Here the bar denotes that the operator acts only in the $z$ variables. The operator $\bar\cM_{1}$ flips parity.
We decompose the second order residual inhomogeneity into a tensor product
\beq \label{e:R2TP}
\mrR_2(s,z,t) =\kappa_0^2(s,t) \bar\mrR_2(z),
\eeq
where the $z$ dependent term has odd parity with respect to $z,$
\beq\label{e:barR2}
\bar\mrR_2:=\begin{pmatrix} 
 -  \bar q_1'+2g_-'(\phi^2)\phi \bar p_1\bar q_1 \cr 
  \bar p_1' - 2g_+'(\phi^2) \phi \bar p_1^2 -\bar\cM_1 \bar q_1 -
  z\phi'- g_+'(\phi^2)\phi |\bar U_1|^2 -2g_+''(\phi^2) \bar p_1^2\phi^3  \end{pmatrix}.
 \eeq
 We  rearrange the second order balance into a linear system for $U_2,$
\beq\label{e:U2a}\mrL \begin{pmatrix} p_2 \cr q_2 \end{pmatrix}= -\mrV_1 \begin{pmatrix} \phi' \cr 0\end{pmatrix} +\kappa_0^2\left(\alpha_1 \bar U_1'-
\bar\mrR_{2}\right).
\eeq
Since both $\bar U_1' and \bar \mrR_2$ have odd parity while $\Psi_0^\dag$ has even parity, the solvability condition for $U_2$ is satisfies with $\mrV_1=0.$
The $U_2$ corrections take the form
\beq\label{e:U2b}
\begin{pmatrix} p_2 \cr q_2 \end{pmatrix}
=\kappa_0^2 \begin{pmatrix} \bar p_2 \cr \bar q_2 \end{pmatrix},
\eeq
where the $z$-only function
\beq\label{e:U2c}
\bar U_2 =\begin{pmatrix} \bar p_2 \cr \bar q_2 \end{pmatrix}= \mrL^{-1} \left(\alpha_1 U_1'-\bar \mrR_{2}\right),\eeq
has odd parity. 

\subsection{The Wilmore Flow}
The normal velocity obtains a singular regularization at $O(\eps^2).$  
At this order the evolution system yields the balance
\beq \label{e:ThirdOrder}\partial_T U_1 - \mrV_0\partial_z U_2  -\mrV_2\partial_z U_0 = \mrL U_3 +\mrR_3. \eeq
From \eqref{e:U1}, the function $U_1$ is a tensor product of the interface curvature with a time-independent function of $z$ alone. We deduce that
\[ \partial_T \mrU_1= (\partial_T\kappa_0 )\bar \mrU_1.\]
 From Pismen, \cite{bib:Pismen} in two space dimensions in co-moving coordinates the curvature evolution is dependent upon the normal velocity,
\be \label{e:Pismen}
\partial_T\kappa_0=-(\Delta_s+\kappa_0^2)\mrV=\alpha_1\left(\Delta_s\kappa_0+\kappa_0^3\right)+ O(\eps^2).
\ee
This allows $\partial_T U_1$ \eqref{e:ThirdOrder} to be replaced by the leading order terms on the right-hand side of \eqref{e:Pismen}. Using the expressions for $\mrV_0$ and for $U_0$, $U_1,$ and $U_2$ we find
\[ \mrL U_3=- \alpha_1 \Delta_s\kappa_0\mrU_1 +\alpha_1\kappa_0^3\left(- \bar U_1 + \bar U_2' \right)-\mrV_2 \begin{pmatrix}\phi' \cr 0 \end{pmatrix} -\mrR_3.\]
The solvability condition for $U_3$ determines the $O(\eps^2)$ correction to the normal velocity
\[ \mrV_2= -\frac{\alpha_1}{\Vert\phi'\Vert_{L^2}}\left(\left\langle\mrR_3,\Psi_0^\dag\right\rangle +\alpha_1\Delta_s\kappa_0\left \langle U_1,\Psi_0^\dag\right\rangle +\alpha_1 \kappa_0^3 \left\langle \bar U_1-\bar U_2',\Psi_0^\dag\right\rangle \right).\]
The well-posedness of the curvature evolution depends upon the sign of the coefficient of the curvature surface diffusion term, $\Delta_s \kappa_0$, in the normal velocity. 
Within $\mrR_3$ the surface Laplacian arises in $\cN_{\pm,k}$ with $k\geq 2.$ However the operator $\cN_{+,3}$ acts on $\phi$ for which $\Delta_s\phi=0.$  Similarly $\cM_1$ given in \eqref{e:cM1-expression} has no surface differential terms, while $\cM_2$  has a zero contribution from its surface diffusion term, see \eqref{e:cM2_expression}. Extracting the curvature surface diffusion terms from $\mrR_3$ yields 
\beq\label{e:R3b}
\mrR_3(U_1,U_2) = 
\Delta_s \kappa_0 \begin{pmatrix}  - \bar q_1 
\cr 
 \bar p_1 \end{pmatrix} + \kappa_0^3\, \bar\mrR_{3,1}(\bar U_1,\bar U_2),
\eeq
where the $\bar\mrR_{3,1}$ terms are functions of $z$ alone. The result yields a second correction to the normal velocity in the form
\beq\label{e:V2}
\mrV_2= \nu\Delta_s\kappa_0 +\alpha_3 \kappa_0^3, 
\eeq
where the coefficient $\nu$ is given by
\beq\label{e:nu}
\nu = -\frac{\alpha_1}{\Vert\phi'\Vert^2_{L^2}} \left \langle \begin{pmatrix}
-\bar q_1+\alpha_1 \bar p_1 \cr \bar p_1+\alpha_1\bar q_1\end{pmatrix}  , \Psi_0^\dag \right\rangle.
\eeq 
Including the $\mrV_2$ terms, the normal velocity obtains the final form reported in \eqref{e:nV_expression}. 

 The sign of $\nu$ is essential to the well-posedness of the truncated normal velocity system. Indeed, the inclusion of $\mrV_2$  in the curvature flow \eqref{e:Pismen} yields the system 
\be \label{e:Pismen2}
\partial_T\kappa_0=\left(\alpha_1\Delta_s -\nu\eps^2 \Delta_s^2\right) \kappa_0 + \alpha_1 \kappa_0^3 -\eps^2 \left(\alpha_3(\Delta_s+\kappa_0^2)\kappa_0^3 +\nu\kappa_0^2\Delta_s\kappa_0\right) +O(\eps^3).
\ee
The $O(\eps^3)$ error terms are all bounded relative to the operator $1+\Delta_s^2$, see \cite{bib:HP15}[Section 6]. Consequently the system is locally well-posed and regularized if $\nu>0$, independent of $\eps\ll1.$ In this case the dominant aspects of the front evolution are controlled by the first three terms. 
Applying the expansions \eqref{e:Dinv} to the adjoint eigenvector $\Psi_0^\dag$, $\alpha_1$, and $\bar U_1$ given in \eqref{psi0mu0},  \eqref{e:alpha0} and \eqref{e:bU1}, respectively yields,
\be \label{e:nu1}
\nu = \frac{1}{ \langle \phi',\psi\rangle^2 \cS(\lambda_\mrD)} +O(\mu).
\ee
The inner product $\langle \phi',\psi\rangle\neq 0$ as both $\phi'$ and $\psi$ are non-negative ground state eigenvectors while $\psi$ is normalized to have $\Vert \psi\Vert_{L^2}=1$. The assumption \eqref{e:cC-bds} keeps $\cS(\lambda_\mrD)$  positive and bounded from  above. 
We deduce that $\nu$ is strictly positive for $|\mu|\leq \mu_*$ for sufficiently small $\mu_*.$ This establishes Main Result\,\ref{MR:1}.

\section{Discussion}

It is possible to extend the range of spectral maps $\cS$ to include growth at infinity, so that the operator $\cM$ is unbounded. One can also design maps that inhibit the curve lengthening bifurcation while preserving the linear stability of the front.   If the map $\cS$ grows linearly at infinity, then $\cS\mrD^{-1}$ is bounded but not smoothing.  The operator $\cL$ remains a compact perturbation of $\cL_\infty$, and the essential spectrum can be controlled. However to maintain   the spectral gap at $\lambda=0$, requires a different strategy to obtain the lower bound \eqref{e:Triple_bdd}. In particular 
$\mrD^{-1}\cS_\mrD$ resembles a multiple of the identity on high frequency terms and maintaining the spectral gap requires an estimate of the form
\beq\label{e:Triple_bdd2} \frac{\langle \mrD^{-1} P_1,P_1\rangle \langle \mrC P_1,P_1\rangle}{\Vert P_1\Vert^2_{L^2}} \geq \alpha,
\eeq
for some $\alpha>0$ and all $P_1\in S_\lambda.$  There is no clear compactness and a lower bound is likely not attained. Estimates of this type require coordination between the operators $\mrD$ and $\mrC$ on high frequency spaces.  If $\cS$ grows faster than linear then $\cL$ may not be a compact perturbation of $\cL_\infty$ and the control of the essential spectrum may break down.

On the other hand, it is tempting to design $\cS$ so that it inhibits the curve lengthening bifurcation. The logical choice is to take $\cS$ positive on $\mbbR_+$ and  negative on $\mbbR_-$ and uniformly bounded. In this setting it is straightforward to see that the essential spectrum remains controlled as the Fredhom boarder samples $\cS$ only where it takes uniformly positive values. For the point spectrum, $\cS_\mrD\mrD^{-1}$ is positive without constraint, and the steps to obtain an estimate of the form 
\eqref{e:Triple_bdd} are straightforward.
With this modification $\cS(\lambda)$ changes sign at $\mu=0$ and the linear normal velocity coefficient $\alpha_1$ will not change sign at $\mu=0,$ remaining positive. This may seem  to inhibit the bifurcation. However, the small $\mu$ expansions \eqref{e:Dinv} break down and the sign of $\nu$ becomes indefinite. Thus it is unclear if the interface evolution will be stable with respect high frequency perturbations of interface shape. 

\section*{Acknowledgment} 
The first author acknowledges NSF support through grant DMS 2205553.  The second author received funding through the SIAM 2024 Postdoctoral Support Program.

\end{document}